\newcommand{\ds}{\oplus}
\newcommand{\sR}{\mathcal{R}}
\newcommand{\Tr}{\mathrm{Tr}} %
\newcommand{\so}{\mathrm{SO}}
\newcommand{\BBF}{\mathbb{F}}
\newcommand{\orth}{\mathrm{GO}}
\newcommand{\co}{\mathrm{CO}}
\newcommand{\gl}{\mathrm{GL}}
\renewcommand{\sl}{\mathrm{SL}}
\newcommand{\spl}{\mathrm{Sp}}
\newcommand{\su}{\mathrm{SU}}
\newcommand{\gu}{\mathrm{GU}}
\newcommand{\la}{\lambda}
\newcommand{\F}{\mathbb{F}}
\newcommand{\prim}{\xi}
\newcommand{\primtwo}{\zeta}
\newcommand{\normeq}{\unlhd} %
\DeclareMathOperator{\CC}{{C}}
\newtheorem{thm}{Theorem}[section]
\newtheorem{lem}[thm]{Lemma}
\newtheorem{prop}[thm]{Proposition}
\newtheorem{cor}[thm]{Corollary}
\newtheorem{df}[thm]{Definition}
\DeclareMathOperator{\diag}{diag}
\DeclareMathOperator{\spin}{sp}
\DeclareMathOperator{\refl}{refl}
\DeclareMathOperator{\rank}{rank}
\DeclareMathOperator{\antidiag}{antidiag}
\title{Constructive homomorphisms for classical groups}
\author{Scott H.\ Murray}
\address{Department of Mathematics and Statistics, Building 11,
University of Canberra, ACT, 2601, Australia} 
\email{\tt murray@maths.usyd.edu.au}
\author{Colva M.\ Roney-Dougal}
\address{School of Mathematics and Statistics, University of St Andrews, Fife KY16 9SS, UK.}
\email{\tt colva@mcs.st-and.ac.uk}
\date{\today}
\begin{document}

\begin{abstract}
Let $\Omega\leq\gl(V)$ be a quasisimple classical group 
in its natural representation over a finite vector space $V$, and let $\Delta=\mathrm{N}_{\gl(V)}(\Omega)$.
We construct the projection from $\Delta$ to $\Delta/\Omega$
and provide fast, polynomial-time algorithms for computing the image
of an element.
Given a discrete logarithm oracle, we also represent 
$\Delta/\Omega$ as a group with at most 3 generators and 6 relations. We then compute canonical representatives for the cosets 
of~$\Omega$. 
 A key ingredient of our algorithms is a new, asymptotically fast method for constructing isometries between spaces with forms.  Our results are useful for the matrix group recognition project,  can be used to solve element conjugacy problems, and can improve algorithms to construct maximal subgroups.
\end{abstract}

\thanks{We would like to thank the anonymous referees, whose careful reading and helpful suggestions have significantly improved this paper. We also thank the
Magma project at the University of Sydney, where some of the work was carried
out.
The second author would like to acknowledge the support of the Nuffield Foundation, and of EPSRC grant EP/C523229/1. }
\subjclass[2000]{Primary 20G40; 20H30, 20-04}

\maketitle

\section{Introduction}\label{sec:intro}
In this paper, we provide a variety of algorithms for classical groups.
Fix a prime $p$ and a power $q$ of $p$, and let $u=2$ for unitary groups and $1$ otherwise.
We consider groups $H\le\gl_d(q^u)$ such that $\Omega\le H\le \Delta$,
where $\Omega$ is a \emph{quasisimple classical group} and $\Delta=\mathrm{N}_{\gl_d(q^u)}(\Omega)$ is the corresponding \emph{conformal group} \cite[Section 2.1]{KL90}. 
Most of our algorithms are randomised Las Vegas in the sense of \cite{Babai97}.
We often need Las Vegas algorithms whose output is independent of the random choices made.
In this case we call the output \emph{canonical}.

The matrix group recognition project \cite{Leedham-Green01} seeks to compute efficiently  composition series for 
matrix groups over finite fields. 
By finding a geometry preserved by the group, in the sense of Aschbacher's theorem \cite{Asch}, 
a normal subgroup and its quotient can often be computed. 
This decomposition terminates on reaching %
 groups that are almost simple,
modulo their subgroup of scalar matrices.
These groups are either classical groups  
in their natural representation (Aschbacher's class~8) or other almost simple groups (class~9).
This paper provides algorithms for dealing with a group known to be in class~8. 
Algorithms to constructively recognise the quasisimple classical groups %
in their natural representation are known \cite{BrooksOmega, BrooksClassical}. 
This paper presents efficient, practical reduction algorithms for the other class~8 groups.

Another motivation is constructing efficient algorithms  
for element conjugacy in classical groups $H$, when the dimension $d$ is large.
The fundamental problem is to determine if two elements are conjugate and, if so, provide a conjugating element.
For the sake of memory efficiency, it makes sense to conjugate
a single element to a canonical representative of its conjugacy class.
Given a solution to this conjugacy problem for $\Delta$ 
\cite{HallerMurray07Pre,Britnell06B},
we can construct an algorithm to solve the element conjugacy algorithm in a group $H$ between $\Omega$ and $\Delta$,
provided that %
we have \emph{canonical} coset representatives for $H/\Omega$. 
This, along with applications to the construction of maximal subgroups,  are the primary motivations for the requirement that our algorithms give canonical 
solutions. 
See Section~\ref{sect:app} for more details.

We give our timings in terms of elementary finite field operations: addition, negation, multiplication, and inversion.
The number of field operations required by our algorithms is polynomial in $d$ and $\log q$, except for some algorithms which require calls to a discrete logarithm oracle.   We specify when this is the case, and count the number of calls to the oracle. %

We consider multiplication of $d\times d$ matrices to take $O(d^\omega)$ field operations: %
for example, the standard method gives $\omega=3$. 
For sufficiently large $d$ (depending on the field size)
Magma~\cite{Magma07} uses the algorithm of \cite{Strassen} with 
$\omega = \log_2 7+\epsilon$ for any $\epsilon>0$: this gives a noticeable practical, as well as a theoretical, improvement.  

A key algorithmic problem for classical groups is the construction of isometries between classical forms.  We give a new method that is asymptotically faster than the method given in \cite{HRD}. 
\begin{thm}\label{thm:fix_form}
Suppose we have two nondegenerate symplectic, unitary, or quadratic forms on the space $V=(\F_{q^u})^d$.
We can determine if they are isometric, and find a canonical isometry between them, with
a Las Vegas algorithm taking $O(d^\omega + d^2\log^2 q)$ field operations.
\end{thm}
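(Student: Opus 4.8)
The plan is to reduce the problem to the construction of a single canonical \emph{standardising} base change, and then to read off both the isometry test and the isometry itself by composition. For each type there is, up to isometry, only a short list of nondegenerate forms on $V=(\F_{q^u})^d$: for symplectic and for unitary forms the class is determined by $d$, while for a quadratic form one records in addition the type $\varepsilon\in\{+,-\}$ (and, in odd dimension and odd characteristic, the discriminant). Fix once and for all a \emph{standard} form $B_{\mathrm{std}}$ in each class. It then suffices to produce, for an arbitrary nondegenerate form $B$ of the given type, an invertible matrix $X$ with $X B X^{*}=B_{\mathrm{std}}$, where $B_{\mathrm{std}}$ is the standard form of the class of $B$, the Gram matrix of $B$ is identified with $B$, and $*$ means transpose in the symplectic and quadratic cases and conjugate-transpose in the unitary case. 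Given standardising matrices $X_{1},X_{2}$ for the two inputs $B_{1},B_{2}$, the forms are isometric exactly when their classes (hence their standard forms) coincide, and then $Y=X_{1}^{-1}X_{2}$ satisfies $Y B_{2} Y^{*}=B_{1}$; computing the class of each input and forming $Y$ costs $O(d^{\omega})$ field operations, and $Y$ is canonical as soon as $X_{1}$ and $X_{2}$ are.

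To build a standardising matrix within the time bound I would use a recursive, block form of the usual reduction of a form to normal form (symmetric or Hermitian Gaussian elimination, or, for quadratic forms in characteristic $2$, the analogous reduction of a quadratic form). Choose, canonically, a subspace $U\le V$ of dimension $\lfloor d/2\rfloor$ that is nondegenerate for the polar bilinear form of $B$ (a standard deterministic pivoting rule locates one in $O(d^{\omega})$ field operations via an $LUP$-type decomposition). Writing the Gram matrix in the corresponding block form and performing one block elimination — which involves the inverse of the leading block and the matching Schur complement, with the appropriate signs and, in the unitary case, Frobenius twists — produces an invertible $X_{0}$ with $X_{0}BX_{0}^{*}$ equal to the orthogonal direct sum $B|_{U}\perp B|_{U^{\perp}}$; this costs $O(d^{\omega})$ field operations. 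Recursing on the two summands yields a block-diagonal $X_{1}$ standardising each, and a single fixed permutation $P$ interleaves the two half-sized standard forms into $B_{\mathrm{std}}$; the standardising matrix is $PX_{1}X_{0}$. The running time satisfies $T(d)=2T(\lfloor d/2\rfloor)+O(d^{\omega})$, which solves to $O(d^{\omega})$ since $\omega>1$. For quadratic forms in characteristic $2$ the polar form is alternating, so $U$ and $U^{\perp}$ are taken with respect to it; one checks that the quadratic form restricts to each summand with nondegenerate polar form, so the recursion goes through, now with even-dimensional summands and base case $\dim=2$.

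At the base cases ($\dim\le 2$) one must carry a small form onto the standard small form of its type. This is immediate linear algebra except for: rescaling a nonzero diagonal entry of a symmetric or Hermitian form, which needs a square root in $\F_{q}$ or a solution of a norm equation $N(x)=c$ over $\F_{q^{2}}/\F_{q}$; and, for quadratic forms, splitting off a hyperbolic plane or normalising the anisotropic plane, which needs the solution of a binary quadratic equation (one square root, together with a trace equation in characteristic $2$). Each of these is handled by a Las Vegas finite-field root-finding routine in $O(\log^{2}q)$ field operations, and is made \emph{canonical} by always returning a fixed root — for instance the root that is itself a square, or the least under a fixed ordering of $\F_{q^{u}}$. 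No discrete-logarithm oracle is needed; indeed randomness enters only here, which is why the algorithm is Las Vegas while its output is canonical. There are $O(d)$ base cases, and the root extractions together with propagating the resulting scalar corrections through the accumulated transition matrix are bounded by $O(d^{2}\log^{2}q)$ field operations, giving the stated total $O(d^{\omega}+d^{2}\log^{2}q)$.

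The main obstacle is meeting all three requirements at once: asymptotic speed, correctness for quadratic forms in characteristic $2$, and canonicity of the output. Speed forces the block recursion, since peeling off one hyperbolic pair or one diagonal entry at a time costs $\Theta(d^{3})$; canonicity then forbids the naive randomised choice of the half-dimensional subspace, so every structural decision — the pivoting rule, the block-elimination formulae, the interleaving permutation, the standard form of each class — must be made deterministic, with randomness confined to finite-field root extraction where a fixed choice among roots keeps the result independent of the random bits. Characteristic $2$ adds a second layer of care, because the quadratic form, not merely its alternating polar form, must be tracked through the elimination. The rest is bookkeeping: writing down the standard forms and interleaving permutations for each type, the explicit block-elimination and base-case transformations, and verifying that each is correct, invertible, and canonical.
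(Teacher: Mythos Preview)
Your overall strategy matches the paper's: both prove the theorem via a more refined result (the paper's Theorem~\ref{thm:fix_form_detail}) that constructs a canonical transformation from an arbitrary form to a fixed standard form, using a recursive halving to block-diagonalise in $O(d^\omega)$ (the paper's Theorem~\ref{thm:diag} and Lemmas~\ref{L1}--\ref{L4}) together with $O(d)$ root extractions---square roots, norm equations, Artin--Schreier equations---for the normalisation, made canonical by a fixed choice of root. The paper organises the two phases differently from you: it first runs the recursion to completion, producing a diagonal (or $2\times2$-block-diagonal) matrix, and \emph{then} performs a single linear sweep over the entries to normalise them and pair off anomalies; you instead normalise at the leaves and reassemble on the way back up.

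There is one genuine oversimplification in your version. The claim that ``a single fixed permutation $P$ interleaves the two half-sized standard forms into $B_{\mathrm{std}}$'' fails in the orthogonal case. When you split a quadratic form into two orthogonal summands you do not control their individual isometry types: a $+$-type form in dimension $4$ can split as the orthogonal sum of two $-$-type anisotropic planes, and the direct sum of their standard forms is not related to the standard $+$-type form by any permutation. Likewise, in odd characteristic each half may contribute a nonsquare diagonal entry, and two nonsquares must be merged into two squares by a genuine $2\times 2$ change of basis (the paper uses $\left(\begin{smallmatrix}1&\nu\\-\nu&1\end{smallmatrix}\right)$ with $1+\nu^2$ nonsquare), not a permutation. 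The paper sidesteps this by postponing all normalisation to a linear post-processing pass that pairs such blocks off sequentially; your recursion can be repaired by the same device, or by inserting a bounded-size correction at each merge, but as written the merge step is incomplete and the ``single fixed permutation'' assertion is false.
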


We now state our main theorem. 

\begin{thm}\label{thm:quot}
Let $\Omega \leq \gl_d(q^u)$ be a quasisimple classical group fixing a known classical form $F$, let  $\Delta = \mathrm{N}_{\gl_d(q^u)}(\Omega)$, and let $G=\Delta/\Omega$.
\begin{enumerate}
\item\label{thm:quot:pres} There is a deterministic algorithm which, on input $F$, constructs a finite presentation $P_1$ for $G$ in $O(\log^2 q)$ field operations. There is a Las Vegas algorithm which constructs the image under the homomorphism $\Delta \to P_1$ of $g \in \Delta$ in $O(d^\omega + d^2\log^2 q)$ field operations. 
\item\label{thm:quot:pc} There is a deterministic algorithm which, on input $F$, constructs a power-conjugate presentation $P_2$ for $G$ with at most $3$ generators and $6$ relations in $O(\log^2 q)$ field operations. There is a Las Vegas algorithm which constructs the image under the homomorphism $\Delta \to P_2$ of $g \in \Delta$ in $O(d^\omega + d^2\log^2 q)$ field operations, plus at most two calls to a discrete logarithm oracle for $\BBF_{q^2}$. 
\item\label{thm:quot:coset}  There is a Las Vegas algorithm which, on input $F$ and an element $g \in \Delta$, constructs a canonical representative of the coset $\Omega g$ in   $O(d^\omega +d^2\log^2 q)$ field operations.
\end{enumerate}
\end{thm}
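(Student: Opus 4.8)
The plan is to understand the structure of $G = \Delta/\Omega$ explicitly in each of the classical cases, and then track how a given $g \in \Delta$ maps into $G$ under natural invariants. The quotient $\Delta/\Omega$ is a well-understood abelian (or almost abelian) group: for the symplectic case it is essentially the group of similitude factors modulo squares together with (when $q$ is odd) a cyclic-of-order-2 part from $\cspl/\spl$ and the scalars; for the unitary and orthogonal cases one also has contributions from the determinant, the spinor norm, and the field automorphism of order $2$. The first task is to write down, for each family and each parity of $q$, generators for $G$ corresponding to these invariants — a diagonal similitude, a scalar, a reflection (in the orthogonal case), and the Frobenius-twisted element (in the unitary case) — and the relations among them. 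Since $|G|$ is bounded by a small constant times $\gcd$'s involving $q \pm 1$, writing both the finite presentation $P_1$ and the power-conjugate presentation $P_2$ with at most $3$ generators and $6$ relations is a finite case analysis; computing these presentations from $F$ alone requires only arithmetic in $\F_q$ (computing orders of roots of unity, gcd's), hence $O(\log^2 q)$ field operations, proving the deterministic parts of \eqref{thm:quot:pres} and \eqref{thm:quot:pc}.

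For the homomorphism $\Delta \to P_1$ in \eqref{thm:quot:pres}: given $g \in \Delta$, I would compute its similitude factor $\lambda(g)$ (the scalar by which $g$ scales the form $F$, obtainable in $O(d^2)$ operations once we know $F$), its determinant $\det(g)$ ($O(d^\omega)$), and, in the orthogonal case, its spinor norm. The similitude factor and determinant are immediate; the spinor norm is the subtle one. To compute the spinor norm I would apply Theorem \ref{thm:fix_form}: since $g$ normalises $\Omega$ and scales $F$ by $\lambda(g)$, the form $F$ and $\lambda(g)^{-1}\cdot({}^gF)$ agree, so after rescaling we may regard $g$ (times a fixed similitude of factor $\lambda(g)$) as an isometry of $(V,F)$, i.e.\ an element of the orthogonal group, and then use a Las Vegas isometry-style computation — Gaussian elimination producing a product of reflections — to read off the spinor norm as a square class, at a cost of $O(d^\omega + d^2\log^2 q)$. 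This dominates, giving the claimed running time. Reducing these invariants modulo the relevant $\gcd$'s (one discrete log may be needed to express a field element as a power of a primitive element, but for $P_1$ we can avoid it since $P_1$ need not be given on a power-conjugate generating set — actually for the image we only need to know the class in a quotient of the multiplicative group, which is determined by computing an appropriate power, $O(\log q)$ operations) yields the image in $P_1$.

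For \eqref{thm:quot:pc}, the power-conjugate presentation is on a generating set of prime-power-order generators, so to express the image of $g$ on that generating set we must solve discrete logarithms in $\F_{q^2}^\times$: the similitude factor and determinant each lie in a cyclic group of order dividing $q^2-1$, and expressing $g$'s image in terms of the pc-generators requires at most two such discrete logs, giving the stated "plus at most two calls to a discrete logarithm oracle for $\F_{q^2}$". The rest of the cost is as in \eqref{thm:quot:pres}. For \eqref{thm:quot:coset}, the strategy is: compute the image $\bar g \in G$ as above, then multiply $g$ by a \emph{canonical} preimage in $\Delta$ of $\bar g^{-1}$ — these canonical coset representatives are fixed in advance as explicit diagonal/monomial/reflection words depending only on $F$ — landing in $\Omega$; the canonical representative of $\Omega g$ is then this fixed preimage of $\bar g$. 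Because the image computation is canonical (Theorem \ref{thm:fix_form} gives a canonical isometry, and the arithmetic reductions are deterministic) and the chosen preimages are fixed functions of $F$, the output is independent of random choices, as required. The main obstacle throughout is the spinor norm computation in the orthogonal case: getting it to run in $O(d^\omega + d^2\log^2 q)$ rather than the naive $O(d^3)$-with-large-field-factor cost is exactly what the fast isometry machinery of Theorem \ref{thm:fix_form} is there to supply, and marrying that machinery to the spinor norm (a quadratic-form, not merely bilinear-form, invariant in characteristic $2$) is the delicate point.
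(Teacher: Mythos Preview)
Your proposal has the right overall shape (compute $\tau$, $\det$, and $\spin$; use these to locate $g$ in a small quotient; lift canonically), but there are two genuine misidentifications that would derail the argument as written.

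First, you have the size and nature of $G=\Delta/\Omega$ wrong. It is not ``bounded by a small constant times $\gcd$'s involving $q\pm1$'': for $\sl_d(q)$ one has $|G|=q-1$, for $\su_d(q)$ one has $|G|=q^2-1$, for $\spl_d(q)$ one has $|G|=q-1$, and in the orthogonal cases $|G|$ is $2(q-1)$ or $4(q-1)$. This matters because the presentation $P_1$ in the paper has one generator $a(\lambda)$ (or $c(\lambda)$, etc.) for \emph{each} $\lambda\in\F_{q^u}^\times$, so $O(q)$ generators; the point of $P_1$ is precisely that the image of $g$ is the generator $a(\tau(g))$ (times a determinant correction), so no discrete logarithm is needed. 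The bounded-generator presentation is $P_2$, and there the discrete logs are unavoidable. Your proposal conflates the two.

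Second, and more seriously, you have inverted the role of Theorem~\ref{thm:fix_form}. You propose to use it to compute the spinor norm via a reflection factorisation, and call this ``the delicate point''. In fact the spinor norm is the \emph{easy} step: for $q$ odd one computes the Wall form $\chi_g(u,v)=2\beta(w,v)$ on $\mathrm{im}(I_d-g)$ and takes $\iota(\det\chi_g)$, and for $q$ even one takes $\rank(I_d+g)\bmod 2$; both are $O(d^\omega+\log q)$ by plain linear algebra, no randomness and no isometry machinery. Where Theorem~\ref{thm:fix_form} is actually needed is elsewhere: to produce the canonical matrix $X$ transforming the \emph{canonical} form to the given form $F$. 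The explicit coset representatives (e.g.\ $C(\lambda)=(\lambda I_m\oplus I_m)^X$, the reflections $R_0,R_1$, the element $C_0^-$) are all defined by conjugating fixed matrices for the standard form by this $X$, and it is the canonicality of $X$ that makes the coset representatives canonical. Without this, part~\eqref{thm:quot:coset} has no content. So Theorem~\ref{thm:fix_form} is used once, up front, to pin down $X$; the per-element work is then $\tau$ ($O(d^2)$), $\det$ ($O(d^\omega)$), $\spin$ via the Wall form ($O(d^\omega+\log q)$), and a conjugation by $X$.
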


By the \emph{type} of the form we mean one of: unitary, symplectic, orthogonal type $+$, orthgonal type $-$, orthogonal odd dimension. 
In Section~\ref{sec:forms} we define our canonical forms, and present algorithms for forms and classical groups, including proving Theorem~\ref{thm:fix_form}. 
In Section~\ref{sec:pres} we prove Theorem~\ref{thm:quot}. 
In Section~\ref{sect:app} we present some applications, before concluding in
Section~\ref{sect:timings} with some  data on our implementations: our algorithms are now part of the standard release of Magma.
The timings for our algorithms depend on the type of the form -- in Theorems~\ref{thm:fix_form} and~\ref{thm:quot} we have given worst-case timings, but more detailed results are given below.

\section{Groups and forms} \label{sec:forms}
In this section, we introduce some algorithms for classical forms and classical groups.
We require that the output of each algorithm %
 be \emph{canonical}:  
for fixed input, every call to the algorithm gives the same output, even if the algorithm is randomised.

\subsection{Fields}\label{subsect:fields}
Let $p$ be a prime and let $q$ be a power of $p$.
As is standard, we assume that $\F_{q}$ is constructed 
by adjoining a canonical root $\prim$ of the Conway polynomial \cite{ModAtlas} 
to the prime field $\F_p$, so that $\prim$ is the canonical primitive element of $\F_q$.
See \cite{LuebeckWebpage} for a current list of the fields for which this assumption is valid. 
We let $\primtwo$ be the canonical primitive element of $\F_{q^2}$, and recall that 
$\xi =\zeta^{q+1}$. 
Given a nonzero %
$\alpha\in\F_q$, the \emph{discrete logarithm} $\log_\xi(\alpha)$ is the unique $i=0,1,\dots,q-2$ %
such that $\alpha=\xi^i$.
We %
now show how to find canonical solutions to various equations over $\F_q$ or~$\F_{q^2}$. 

The next result is the main source of randomisation in our algorithms.
\begin{thm}[{\cite[Theorem 8.12]{Geddes92}}] \label{quadratic}
A root in $\F_{q^2}$ for a quadratic polynomial with coefficients in $\F_q$ can be found by a Las Vegas algorithm in $O(\log q)$ field operations.
\end{thm}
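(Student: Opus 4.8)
The plan is to reduce the problem to splitting a monic quadratic over $\F_{q^2}$ and then apply equal-degree (Cantor--Zassenhaus) factorisation. After scaling we may assume the input is monic, $f = x^2 + bx + c$ with $b,c\in\F_q$; the case of vanishing leading coefficient is trivial. Every root of $f$ has degree at most $2$ over $\F_q$ and hence lies in $\F_{q^2}$, so it suffices to isolate one linear factor of $f$ over $\F_{q^2}$.

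I would first dispose of the inseparable and repeated-root cases using $g = \gcd(f,f')$. If $f' = 0$ then $q$ is even and $b = 0$, so $f = (x+\sqrt c)^2$ with $\sqrt c = c^{q/2}$ obtained by $\log_2 q - 1$ squarings in $\F_q$; if $\deg g = 1$ then the root of $g$ is the unique repeated root of $f$. Both are deterministic and cost $O(\log q)$ field operations. In the remaining case $\gcd(f,f') = 1$, so $f$ has two distinct roots $\theta_1 \neq \theta_2$ in $\F_{q^2}$, which I separate by the standard randomised equal-degree splitting. Choose $\beta = \gamma_0 + \gamma_1 x$ with $\gamma_0,\gamma_1$ uniformly random in $\F_{q^2}$, so that $\beta$ is a uniformly random element of the algebra $A = \F_{q^2}[x]/(f)$, and compute $\gcd(f, \Phi(\beta))$, where $\Phi(\beta) \equiv \beta^{(q^2-1)/2} - 1 \pmod{f}$ if $q$ is odd and $\Phi(\beta) \equiv \sum_{i=0}^{2\log_2 q - 1}\beta^{2^i} \pmod{f}$ (the absolute trace) if $q$ is even. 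Since $(\gamma_0,\gamma_1)\mapsto(\beta(\theta_1),\beta(\theta_2))$ is a bijection onto $\F_{q^2}^2$, the values $\Phi(\beta)(\theta_1)$ and $\Phi(\beta)(\theta_2)$ are independent, and with probability essentially $1/2$ exactly one of them is zero, in which case the gcd is a single linear factor $x-\theta_i$ and we read off a root; repeat until success, which takes an expected $O(1)$ trials.

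The running-time claim hinges on the fact that all polynomial arithmetic is done modulo the quadratic $f$, so each multiplication or squaring in $A$ costs a bounded number of $\F_{q^2}$-operations, hence $O(1)$ field operations. Thus $\Phi(\beta)$ is computed by repeated squaring in $O(\log q)$ field operations, the gcd costs $O(1)$, and with $O(1)$ expected iterations the total is $O(\log q)$ field operations for a Las Vegas algorithm. The step needing most care is the even-characteristic case: one cannot raise to the power $(q^2-1)/2$, so the multiplicative test is replaced by the additive trace-map test, and it is essential to randomise over all of $A$ rather than over shifts $x+\gamma$ alone (for which $\Phi(\beta)(\theta_1)$ and $\Phi(\beta)(\theta_2)$ would always differ by the fixed quantity $\Tr_{\F_{q^2}/\F_2}(\theta_1+\theta_2)$, which in fact vanishes, making such shifts useless), together with the separate, deterministic treatment of the inseparable quadratic $x^2+c$.
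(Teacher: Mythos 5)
The paper does not prove this result; it is quoted verbatim from the cited reference \cite[Theorem~8.12]{Geddes92}, so there is no internal proof to compare against. Your proposal is a correct and self-contained proof, and it follows the same route the reference takes: reduce to the squarefree case via $\gcd(f,f')$, then split the two roots over $\F_{q^2}$ by Cantor--Zassenhaus equal-degree factorisation, using $\beta^{(q^2-1)/2}-1$ in odd characteristic and the absolute-trace map in characteristic $2$, with success probability essentially $1/2$ per trial because a uniformly random $\beta\in\F_{q^2}[x]/(f)$ has independent uniform evaluations at the two roots. The complexity accounting (all arithmetic modulo a quadratic, $O(\log q)$ squarings for the exponentiation or trace, $O(1)$ for the gcd) is right, and your observation that in characteristic $2$ one must randomise over all of $\F_{q^2}[x]/(f)$ rather than over shifts $x+\gamma$ alone — since $\Tr_{\F_{q^2}/\F_2}(\theta_1+\theta_2)=\Tr_{\F_{q^2}/\F_2}(b)=0$ for $b\in\F_q$ — is a genuine and correctly diagnosed pitfall. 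One could equally well have completed the square in odd characteristic and reduced to a Tonelli--Shanks or Cipolla square-root computation, but that is not available in characteristic $2$, whereas your treatment is uniform across characteristics.
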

\noindent 
Let $\F_q^\times$ denote the multiplicative group of $\F_q$ and let
$\F_q^{\times 2}$ denote the set of squares in $\F_q^\times$.
Every element of $\F_{q^2}$ can be written as  
$a_0 + a_1\zeta + \dots + a_{m-1}\zeta^{m-1}$, where $p^m=q^2$ and 
$a_i \in \{0,\dots,p-1\}$.  
Lexicographically ordering the coefficients induces an ordering on $\F_{q^2}$. 
We fix a \emph{canonical root} of a quadratic equation by taking the smallest root with respect to this %
 ordering on $\F_{q^2}$.
Hence for $\alpha\in\F_q$ we can find a \emph{canonical square root} 
$\sqrt{\alpha}\in\F_{q^2}$. %
For $q$ even, $\alpha$ has a unique square root, equal to $\alpha^{q/2}$, so $\sqrt{\alpha}$ can be computed by a deterministic algorithm in $O(\log q)$ field operations. For $\alpha \in \F_q^\times$ with $q$ odd, define $\iota(\alpha) = 0$ if $\alpha \in \F_q^{\times 2}$ and $\iota(\alpha) = 1$ otherwise. Since $\iota(\alpha) = 0$ if and only if $\alpha^{(q-1)/2} = 1$, there is a deterministic algorithm to determine $\iota(\alpha)$  which takes  $O(\log q)$ field operations.

Canonical solutions for trace and norm equations are needed for the unitary groups.
\begin{prop}\label{prop:norm}
Let $\alpha \in \F_q^\times$. There is a deterministic algorithm to find a canonical solution $\eta \in \F_{q^2}$ to the trace equation $\eta + \eta^q = \alpha$ which takes $O(1)$ field operations if $q$ is odd, and $O(\log q)$ otherwise. There is a Las Vegas algorithm to find a canonical solution $\eta \in \F_{q^2}$ of the norm equation $\eta^{q+1}=\alpha$ which takes $O(\log q +\log^2 p)$ field operations.
\end{prop}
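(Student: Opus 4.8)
The plan is to handle the two equations independently, in each case reducing the problem to a few exponentiations in $\F_{q^2}$ together with, for the norm equation in odd characteristic, a single application of Theorem~\ref{quadratic}.

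For the trace equation $\eta+\eta^q=\alpha$ I would, when $q$ is odd, simply take $\eta=\alpha/2$: this lies in $\F_q$, is fixed by the $q$-power map, and is evidently canonical and computed in $O(1)$ field operations. When $q$ is even we cannot divide by $2$, so instead I would form $\theta=\prim+\prim^q=\Tr_{\F_{q^2}/\F_q}(\prim)$; one checks $\theta\in\F_q$ and $\theta\ne 0$ (otherwise $\prim^q=\prim$, so $\prim\in\F_q$, contradicting that $\prim$ generates $\F_{q^2}$), and then $\eta=\alpha\theta^{-1}\prim$ satisfies $\eta+\eta^q=\alpha\theta^{-1}(\prim+\prim^q)=\alpha$ because $\alpha\theta^{-1}\in\F_q$. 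Computing $\prim^q$ by repeated squaring dominates, so this gives a deterministic canonical solution in $O(\log q)$ field operations.

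For the norm equation I would first observe that $\F_{q^2}^\times$ is cyclic of order $(q-1)(q+1)$, so the $(q+1)$-st powers form the unique subgroup of order $q-1$, namely $\F_q^\times$; hence a solution always exists. When $q$ is even we have $\gcd(q+1,q-1)=1$, and a direct exponent calculation ($q(q+1)/2\equiv1\pmod{q-1}$) shows that $\eta=\alpha^{q/2}$, the unique square root of $\alpha$ in $\F_q$, works; this is deterministic, canonical, and costs $O(\log q)$. When $q$ is odd I would compute $\iota(\alpha)$ and then the canonical square root $\beta=\sqrt{\alpha}\in\F_{q^2}$ via Theorem~\ref{quadratic}. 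If $\iota(\alpha)=0$ then $\beta\in\F_q^\times$, so $\eta=\beta$ works since $\eta^{q+1}=\eta^{q-1}\eta^2=\alpha$. If $\iota(\alpha)=1$ then $\beta\notin\F_q$, and from $\beta^2=\alpha\in\F_q$ one gets $\beta^q=-\beta$ and hence $\beta^{q+1}=-\alpha$; to correct the sign I would multiply by the fixed element $\rho=\primtwo^{(q-1)/2}$, which satisfies $\rho^{q+1}=\primtwo^{(q^2-1)/2}=-1$, and take $\eta=\rho\beta$, so that $\eta^{q+1}=(-1)(-\alpha)=\alpha$. The only randomised step is this single square-root extraction; assembling the costs of computing $\iota(\alpha)$, the exponentiations, and Theorem~\ref{quadratic} yields the bound $O(\log q+\log^2 p)$, and every intermediate quantity is specified deterministically so the output is canonical.

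The mathematical work is light — essentially cyclic-group bookkeeping, the one nontrivial ingredient being the correcting element $\rho=\primtwo^{(q-1)/2}$ for the non-square case. I expect the main obstacle to be the complexity accounting rather than anything conceptual: verifying that all the work really lands within $O(\log q+\log^2 p)$ (in particular organising the square-root step so the prime-field arithmetic contributes only $O(\log^2 p)$ and not $O(\log^2 q)$), and checking at each step that the choices made are genuinely canonical, i.e.\ independent of the random bits consumed inside Theorem~\ref{quadratic}.
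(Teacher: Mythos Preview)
Your proposal is correct. The trace part matches the paper's proof: for $q$ even the paper phrases it as solving a $2\times 2$ linear system after evaluating Frobenius on an $\F_q$-basis of $\F_{q^2}$, and your explicit formula $\eta=\alpha\theta^{-1}\zeta$ with $\theta=\zeta+\zeta^q$ is exactly that computation carried out with the basis $\{1,\zeta\}$. (One notational slip: in that paragraph you write $\prim$ where you mean the primitive element of $\F_{q^2}$, which in the paper's conventions is $\primtwo=\zeta$, not $\prim=\xi$.)

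For the norm equation your treatment of the non-square case is genuinely different from, and cleaner than, the paper's. The paper splits on $q\bmod 4$: for $q\equiv 1\pmod 4$ it factors the irreducible polynomial $X^2+\alpha$ over $\F_{q^2}$, while for $q\equiv 3\pmod 4$ it invokes a construction of Blake--Gao--Mullin to produce, in $O(\log^2 p)$ prime-field operations, an element $c\in\F_p$ with $X^2-2cX-1$ irreducible over $\F_q$, then rescales to obtain an irreducible quadratic with constant term $\alpha$. Your single correcting factor $\rho=\zeta^{(q-1)/2}$ (with $\rho^{q+1}=-1$) handles both residues of $q$ uniformly, avoids the external reference, and in fact runs in $O(\log q)$ field operations: the $\log^2 p$ term in the stated bound is an artefact of the paper's route rather than of the problem itself. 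What the paper's approach buys is that the answer is read off directly as a root of an irreducible quadratic with coefficients in $\F_q$, fitting Theorem~\ref{quadratic} literally; your approach instead relies on the availability of $\zeta$ as part of the ambient data, which the paper explicitly assumes.
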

\begin{proof}
For the trace equation with $q$ odd, $\eta=\alpha/2$.  
Otherwise, use the fact that $\alpha\mapsto \alpha^q$ is an $\BBF_{q}$-linear map.
After we evaluate this map on an $\BBF_{q}$-basis of $\BBF_{q^2}$ deterministically in $O(\log q)$ field operations, the problem is reduced to two dimensional system of linear equations over $\BBF_{q}$.
Since $\eta$ exists by \cite[Theorem~6.3]{Lang93Book}, it can now be found by linear algebra.

We construct a solution to the norm equation in three cases.
If $\alpha \in \F_q^{\times 2}$, let $\eta:=\sqrt{\alpha}$, then $\eta^{q+1}=\eta^2=\alpha$. %
If $\alpha \not\in \F_q^{\times 2}$ and $q\equiv 1\pmod{4}$, then 
$-1 \in \BBF_q^{\times 2}$, so $-\alpha \not\in \BBF_q^{\times 2}$.
Hence the polynomial $X^2+\alpha$ is irreducible over $\F_q$, and  its roots in $\BBF_{q^2}$ have norm~$\alpha$, which can be found by Theorem~\ref{quadratic}.
If $\alpha \not\in \F_q^{\times 2}$ and $q\equiv 3\pmod{4}$, then $-\alpha \in \F_q^{\times 2}$. Let $\beta = \sqrt{-\alpha}$ and write $p+1=2^ms$ for $s$ odd.
Calculate $c\in\F_p$ in $O(\log^2 p)$ field operations by
$$ 
  c_1 := 0; \quad \quad 
  c_{i+1} := \left(\frac{c_i+1}{2}\right)^{\frac{p+1}4} \ 
    (i=1,\dots,m-2); \quad \quad 
  c := \left(\frac{c_{m-1}-1}{2}\right)^{\frac{p+1}4}.
$$ 
By \cite{BlakeGaoMullin93}, the polynomial $g(X)=X^2-2cX-1$ is
irreducible over~$\F_q$. 
Hence $-\alpha g(X/\beta)=X^2-2\beta c X+\alpha$ is also irreducible and its roots in $\BBF_{q^2}$ have norm~$\alpha$.
\end{proof}
The following elements are all used to compute with orthogonal groups.
\begin{prop}\label{prop:canelts}
$\;$
\begin{enumerate}
\item\label{thm:canelts:gammaodd} There is a deterministic algorithm to construct, on input an odd $q$, a canonical $\gamma\in\F_q^\times$ such that $\gamma$ and $1-4\gamma$ are nonsquare. The algorithm takes $O(\log q)$ field operations.
\item\label{thm:canelts:gammaeven} There is a deterministic algorithm to construct, on input an even $q$, a canonical $\gamma\in\F_q^\times$ such that $X^2+X+\gamma$ 
is irreducible over $\F_q$. The algorithm takes $O(\log^2 q)$ field operations.
\item\label{thm:canelts:nu} There is a deterministic algorithm to construct, on input an odd $q$, a canonical $\nu\in\F_q^\times$ such that $1 + \nu^2$ is nonsquare. The algorithm takes $O(\log q)$ field operations. 
\end{enumerate}
\end{prop}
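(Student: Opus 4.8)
The plan is to treat the three parts separately, handling~(2) and~(3) by direct constructions and then deducing~(1) from~(3).

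\emph{Part~(2).} Write $q=2^f$ and recall the classical fact that $X^2+X+\gamma$ has a root in $\F_q$ precisely when $\gamma$ lies in the image of the $\F_2$-linear map $t\mapsto t^2+t$, which is exactly the kernel of the absolute trace $\Tr_{\F_q/\F_2}$; hence $X^2+X+\gamma$ is irreducible over $\F_q$ iff $\Tr_{\F_q/\F_2}(\gamma)=1$ (so in particular $\gamma\neq0$). I would take $\gamma:=\xi^i$ for the least $i\geq0$ with $\Tr_{\F_q/\F_2}(\xi^i)=1$. Since $\xi$ is primitive it generates $\F_q$ over $\F_2$, so $1,\xi,\dots,\xi^{f-1}$ is an $\F_2$-basis and, as the trace is not identically zero, such an $i<f$ exists; this pins $\gamma$ down canonically. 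Each $\Tr_{\F_q/\F_2}(\xi^i)=\xi^i+\xi^{2i}+\dots+\xi^{2^{f-1}i}$ costs $O(f)$ squarings and additions, so the whole search is $O(f^2)=O(\log^2 q)$ field operations. (When $f$ is odd one may shortcut with $\gamma=1$.)

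\emph{Part~(3).} The key point is that, since we need a \emph{deterministic} algorithm, we may not extract square roots (Theorem~\ref{quadratic} is Las Vegas), so I would build $\nu$ explicitly from the canonical primitive elements $\xi\in\F_q$ and $\zeta\in\F_{q^2}$, splitting on $q\bmod4$. If $q\equiv3\pmod4$, then $-1$ is a nonsquare of $\F_q$ and $i:=\zeta^{(q^2-1)/4}$ satisfies $i^2=-1$ with $i\notin\F_q$, so $\{1,i\}$ is an $\F_q$-basis of $\F_{q^2}$ with norm $\mathrm{N}(a+bi)=a^2+b^2$. The element $\zeta^{(q-1)/2}$ has norm $\zeta^{(q^2-1)/2}=-1$; since elements of $\F_q$ have square norm it lies outside $\F_q$, so writing $\zeta^{(q-1)/2}=\nu+wi$ gives $w\neq0$, and $\nu^2+w^2=-1$ yields $1+\nu^2=-w^2$, a nonsquare, with $\nu\neq0$. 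If $q\equiv1\pmod4$, then $-1=j^2$ for $j:=\xi^{(q-1)/4}\in\F_q^\times$, so $1+\nu^2=(1+j\nu)(1-j\nu)$; putting $x=j\nu$, the goal becomes ``$1-x^2$ is a nonsquare'', equivalently $x^2+\xi z^2=1$ for some $z\neq0$ (nonsquares being $\xi$ times squares). Now $-\xi$ is a nonsquare of $\F_q$, the element $\eta:=\zeta^{(q+1)^2/4}$ satisfies $\eta^2=-\xi$ and $\eta\notin\F_q$, so $\{1,\eta\}$ is an $\F_q$-basis with $\mathrm{N}(a+b\eta)=a^2+\xi b^2$; the norm-one element $\zeta^{q-1}$ lies outside $\F_q$, so $\zeta^{q-1}=x+z\eta$ with $z\neq0$ and $x^2+\xi z^2=1$, and $x\neq0$ since $x=0$ would make $\xi$ a square. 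Then $\nu:=x/j$ gives $1+\nu^2=1-x^2=\xi z^2$, a nonsquare. Each case is $O(1)$ exponentiations in $\F_{q^2}$ and a $2\times2$ change of basis, i.e.\ $O(\log q)$ field operations.

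\emph{Part~(1).} Given the $\nu$ of part~(3), set $\gamma:=(4(1+\nu^2))^{-1}$. Since $1+\nu^2\neq0$ we have $\gamma\in\F_q^\times$; $\gamma$ is a nonsquare because $1+\nu^2$ is and $4$ is a square; and $1-4\gamma=\nu^2/(1+\nu^2)$ is a nonsquare because $\nu^2$ is a square and $1+\nu^2$ is not. This adds only $O(1)$ field operations. I expect the only real obstacle to be the $q\equiv1\pmod4$ case of part~(3): one has to see that ``$1+\nu^2$ nonsquare'' linearises, via $1+\nu^2=(1+j\nu)(1-j\nu)$, into finding a square $x^2$ with $1-x^2$ nonsquare, and then realise such an $x$ deterministically as a coordinate of a norm-one element of $\F_{q^2}$ in a basis adapted to the nonsquare $-\xi$; fixing the exponents of $\zeta$ so that $\eta^2=-\xi$ and $\zeta^{q-1}$ has nonzero $\eta$-component is where the care is needed, and everything else is routine.
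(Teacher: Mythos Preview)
Your argument is correct in all three parts, but it follows a different path from the paper in each.

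For part~(3) the paper gives a single closed formula, $\nu = 2\zeta^{(q+1)/2}(\zeta-\zeta^q)^{-1}$, and checks directly that $1+\nu^2 = \bigl((\zeta+\zeta^q)/(\zeta-\zeta^q)\bigr)^2$ is a nonsquare because $(\zeta+\zeta^q)/(\zeta-\zeta^q)\notin\F_q$. This avoids the case split on $q\bmod 4$ and the auxiliary bases $\{1,i\}$ and $\{1,\eta\}$. Your approach is longer but perfectly valid; the norm computations and the checks that $\zeta^{(q-1)/2}$, $\zeta^{q-1}$ lie outside $\F_q$ are all sound.

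For part~(1) the paper again writes down a closed form, $\gamma = \xi(\zeta+\zeta^q)^{-2}$, and verifies $1-4\gamma = (\zeta-\zeta^q)^2(\zeta+\zeta^q)^{-2}$ directly. Your deduction of~(1) from~(3) via $\gamma=(4(1+\nu^2))^{-1}$ is a neat observation---it makes explicit that (1) and (3) are essentially the same problem---but it does make part~(1) depend on the case analysis in your part~(3).

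For part~(2) the paper takes a different route: starting from $a_0=1$, it repeatedly solves $X^2+X+a_i$ when possible and shows the process must terminate with an irreducible polynomial within $O(\log q)$ steps. Your trace-based search among $1,\xi,\dots,\xi^{f-1}$ is arguably simpler and more transparent, and the complexity analysis is the same.
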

\begin{proof}
For (\ref{thm:canelts:gammaodd}), note that $\zeta+\zeta^q\ne0$ (recall that $\zeta$ is the canonical primitive element %
 in $\F_{q^2}$), as otherwise 
$\zeta^{q-1}=-1=\zeta^{(q^2-1)/2}$.
Set $\gamma =\xi(\zeta+\zeta^q)^{-2}$, then $\gamma\in\F_q$ because $\gamma^q=\gamma$.
Also,  $\gamma \not\in \BBF_q^{\times 2}$  because $\xi \not\in \BBF_q^{\times 2}$. Finally, $1-4\gamma = %
(\zeta-\zeta^q)^2(\zeta+\zeta^q)^{-2} \not\in \BBF_{q}^{\times 2},$
since $(\zeta-\zeta^q)(\zeta+\zeta^q)^{-1} \not\in\F_{q}$.

For (\ref{thm:canelts:gammaeven}), let $q=2^m$.  If $m$ is odd, let $\gamma=1$. Otherwise, let  $m = 2^r s$ with %
 $s$ odd.
Define $a_i$ recursively:  let %
$a_0=1$,  and let %
$a_{i+1}$ be the canonical root of $X^2+X+a_i$ in $\F_q$.  
Define $\gamma$ to be the first $a_j$ for which $X^2+X+a_j$ is irreducible, if any.
Define  $T:\F_q\to\F_q$ by $T(x)=x^2+x$, and note that 
$T(a_i)= a_i^2 + a_i = a_{i-1}$ for $i\ge1$.
It is easy to show that 
$T^{2^i}(x)=x^{2^{2^i}}+x$ for all $i$.
Now suppose $a =a_{2^r+1}\in\F_q$ exists.
Then $T^{2^r+1}(a)=1$, so $T^{2^{r+1}}(a) = T^{2^{r+1}-2^r-1}(1)=0$, and so
$a^{2^{2^{r+1}}}=a$.
Hence $a \in \F_{2^{2^{r+1}}}$, which intersects $\F_q$ in 
$\F_{2^{2^r}} $.
This implies that $a^{2^{2^r}}=a$, so 
$T^{2^r}(a)=0$, which contradicts $T^{2^r+1}(a)=1$. Therefore 
$j\le2^r\le\log q$.

For (\ref{thm:canelts:nu}), note that $4 \zeta^{q+1}(\zeta - \zeta^q)^{-2} \in \BBF_q^{\times 2}$. Let $\nu  = 2 \zeta^{(q+1)/2}(\zeta - \zeta^q)^{-1}\in \BBF_q$ be its
square root, then $1 + \nu^2 \not\in \BBF_q^{\times 2}$.   
\end{proof}

\subsection{Forms and Isometries} \label{subsec:forms}
In this subsection, we define our canonical forms, and 
present algorithms to construct %
isometries and similarities %
 between forms. 

Let $V=(\F_{q^u})^d$ and let $v_1, \ldots, v_d$ be the basis of $V$ with $(v_i)_j=1$ if $i=j$ and $0$ otherwise. %
By $\diag(a_1, a_2, \ldots, a_d)$ we mean the $d \times d$  matrix
with entry $a_i$ in position $(i, i)$ and $0$ elsewhere. 
By $\antidiag(a_1, a_2, \ldots, a_d)$ we mean the $d \times d$ 
matrix with entry $a_i$ in position $(i, d-i+1)$ and $0$
elsewhere. By $A \oplus B$ we mean a block diagonal matrix, with blocks $A$ and $B$ along the main diagonal and $0$ elsewhere. 
We denote the transpose of $A$ by $A^\Tr$.

The following results are standard and can 
be found in  \cite[Chapter 16]{Burgisser90}.
\begin{thm} \label{mat_complexity}
There are deterministic algorithms to find the row echelon form, the rank, the nullspace, or the determinant of a $d \times d$ matrix over $\F_q$. Each algorithm requires $O(d^\omega)$ field operations. 
\end{thm}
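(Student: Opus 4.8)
The plan is to reduce all four problems to matrix multiplication by divide and conquer, so that each running time obeys a recurrence of the shape $T(d)=2T(d/2)+O(d^\omega)$, which (as $\omega\ge 2$) solves to $T(d)=O(d^\omega)$. The engine is the \emph{LUP decomposition}: any $d\times d$ matrix $A$ over $\F_q$ can be written $PA=LU$ with $L$ lower unitriangular, $U$ upper triangular, and $P$ a permutation matrix. Given such a decomposition the other three quantities are essentially free, so the bulk of the work is producing it within the stated bound.

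First I would establish the LUP decomposition recursively, following the method of Bunch--Hopcroft described in \cite[Chapter~16]{Burgisser90}. Split $A$ into its top $\lceil d/2\rceil$ rows $B$ and bottom $\lfloor d/2\rfloor$ rows $C$, and recursively LUP-decompose $B$; this pushes a maximal independent set of columns of $B$ to the front and exhibits $B$ in echelon form. Apply the resulting column permutation to $C$, clear the pivot columns of $B$ out of $C$ using one rectangular matrix multiplication, and recursively decompose the remaining Schur-complement block. Reassembling the triangular and permutation factors is bookkeeping; the only arithmetic outside the two recursive calls is a bounded number of multiplications and additions of $O(d)\times O(d)$ matrices, which gives the claimed recurrence. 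To make the algorithm deterministic, and hence the output canonical, one chooses at each base case the pivot to be, say, the first nonzero entry in the natural ordering rather than a random one.

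The remaining items follow at once. The row echelon form of $A$ is read off from $U$ together with the recorded column permutation; the rank is the number of nonzero diagonal entries of $U$; and $\det A=\varepsilon(P)\prod_i U_{ii}$, where $\varepsilon(P)=\pm1$ is the sign of $P$, computed in $O(d)$ further operations. For the nullspace, once $A$ is in echelon form one inverts the invertible upper-triangular block spanned by the pivots by the block formula $\left(\begin{smallmatrix}X&Y\\0&Z\end{smallmatrix}\right)^{-1}=\left(\begin{smallmatrix}X^{-1}&-X^{-1}YZ^{-1}\\0&Z^{-1}\end{smallmatrix}\right)$, whose cost again satisfies $T(d)=2T(d/2)+O(d^\omega)$, and then expresses the pivot coordinates of a kernel vector in terms of the free coordinates, reading off a basis with one more matrix multiplication.

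The main obstacle is the correct handling of rank-deficient intermediate blocks inside the LUP recursion. The naive Schur-complement/inversion scheme of \cite{Strassen} presupposes that every leading principal submatrix encountered is invertible, which fails in general, and over a finite field one cannot perturb the entries to force genericity; so pivoting with partial permutations is essential, and getting the permutations and the reassembly of the triangular factors exactly right in the presence of singular blocks is the technical heart of the argument. Everything else — the complexity analysis, the back-substitution for the nullspace, the determinant bookkeeping — is routine, and the finiteness of $\F_q$ causes no difficulty since every step is a purely algebraic manipulation using only field operations.
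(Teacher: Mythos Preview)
Your sketch is correct and is essentially the content of \cite[Chapter~16]{Burgisser90}, which is precisely what the paper does: it states the theorem as standard and cites that reference without proof. So there is nothing to compare against, and your reduction of all four problems to a recursive LUP decomposition with the recurrence $T(d)=2T(d/2)+O(d^\omega)$ is the expected argument. One minor quibble: the recurrence solves to $O(d^\omega)$ only because $\omega>2$ strictly (for $\omega=2$ you would pick up a $\log d$ factor), so ``as $\omega\ge 2$'' should read ``as $\omega>2$''; in the paper's conventions $\omega$ is a fixed exponent such as $3$ or $\log_27+\epsilon$, so this is harmless.
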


We refer to \cite{Taylor} or \cite{Grove02Book} for basic terminology on classical forms.
We fix the following notation:
either $\beta$ is a nondegenerate symplectic or unitary form over $V$;  or
$Q$ is a nondegenerate quadratic form over $V$ and $\beta$ is its polar form, so that $2Q(v)=\beta(v,v)$.
A vector $v$ is \emph{isotropic} if $\beta(v, v)= 0$ and 
\emph{singular} if $Q(v) = 0$: note that if $q$ is even and the form is quadratic
then there can exist vectors that are isotropic but nonsingular. A vector is \emph{anisotropic} if $Q(v) \neq 0$. 
The matrix of $\beta$ is $F = (\beta(v_i, v_j))_{d \times d}$, and satisfies
$\beta(u,v) = uFv^{\sigma\Tr}$, where $\sigma$ is the field automorphism $x \mapsto x^q$ (nontrivial only in the unitary case). %
The matrix of $Q$ is the upper triangular matrix $M=(m_{ij})_{d \times d}$ such that $Q(v) = vMv^\Tr$
for $v = (a_1, \ldots, a_d)$.
If $\beta$ is the polar form of $Q$, then
$F=M+M^{\text{Tr}}$ and $F$ determines $M$ if and only if $q$ is odd.
Forms $\beta_1$ and $\beta_2$ (or $Q_1$ and $Q_2$) 
are \emph{isometric} if there exists an $A\in\text{GL}_d(q^u)$ such that
$\beta_1(u,v)=\beta_2(uA,vA)$ for all $u,v\in V$ (respectively, such that $Q_1(v) = Q_2(vA)$ for all $v \in V$). %
Forms $\beta_1$ and $\beta_2$ (or $Q_1$ and $Q_2$)  %
are \emph{similar}
if there exists a $\lambda \in \F_{q^u}^\times$ such that  $\beta_1$ is isometric to $\lambda\beta_2$ (respectively, such that $Q_1$ is isometric to $\lambda Q_2$).

\begin{df}[Canonical classical forms]\label{def:standard_forms} 
We define the following canonical forms: \\
\noindent {\bf Symplectic or even dimension unitary:} $d = 2m$ and  
  $V$ has basis $(e_1, \ldots, e_m, f_m, \ldots, f_1)$ 
  with $\beta(e_i,e_j) = \beta(f_i,f_j) = 0$, 
  $\beta(e_i,f_j) = \delta_{ij}$.\\
{\bf Unitary, odd dimension:}  ${d = 2m+1}$ and  
  $V$ has basis $(e_1, \ldots, e_m, x, f_m, \ldots, f_1)$ 
  with $\beta(e_i,e_j)$   \\%CMRD change
 $= \beta(f_i,f_j) %
= \beta(e_i,x) = \beta(f_i,x)= 0$, 
  $\beta(e_i,f_j) = \delta_{ij}$, $\beta(x,x)=1$.\\
{\bf Orthogonal, $\circ$ type:}  ${d = 2m+1}$ and  
  $V$ has basis $(e_1, \ldots, e_m, x, f_m, \ldots, f_1)$ 
  with $Q^\circ(e_i) = Q^\circ(f_i) =  \beta^\circ(e_i, e_j) = \beta^\circ(f_i, f_j) = %
\beta^\circ(e_i,x) = 
  \beta^\circ(f_i,x)= 0$, 
  $\beta^\circ(e_i,f_j) = \delta_{ij}$, $Q(x)=1$.\\
{\bf Orthogonal, $\mathbf{+}$ type:} ${d = 2m}$ and
   $V$ has basis $(e_1, \ldots, e_m, f_m, \ldots, f_1)$ with 
  $Q^+(e_i) = Q^+(f_j) = \beta^+(e_i, e_j) = \beta^+(f_i, f_j)  %
= 0$ and $\beta^+(e_i, f_j) = \delta_{ij}$.\\
{\bf Orthogonal, $\mathbf{-}$ type:} ${d = 2m+2}$ and
  $V$ has basis $(e_1, \ldots, e_{m}, x, y, f_m, \ldots, f_1)$ with 
  $Q^-(e_i) = Q^-(f_j) = \beta^-(e_i, e_j) = \beta^-(f_i, f_j) %
= 0$, $\beta^-(e_i, f_j) = \delta_{ij}$, $\beta^-(a, b) = 0$ for 
  $a \in\{e_i, f_j\}$, $b \in \{x,y\}$, $Q^-(x)=\beta^-(x, y)=1$, $Q^-(y)= \gamma$, where $\gamma$ is as in Proposition~\ref{prop:canelts}. 
\end{df}

It is well known (see for instance \cite{Taylor}) that
every nondegenerate quadratic, symplectic or unitary form over a finite field is similar to exactly one of the forms given in Definition~\ref{def:standard_forms}.
For odd dimension and characteristic, the two isometry classes of quadratic forms are similar.  
Otherwise, forms are similar if and only if they are isometric.
The \emph{discriminant} of $Q$ is $\iota(\det(F))$. 
Two quadratic forms 
are isometric if and only if they have the same discriminant.

The following will be needed for constructing isometries and 
coset representatives.  Unitary forms have an anisotropic
vector whenever they are not identically zero, and quadratic forms have a nonsingular 
vector whenever they are not identically zero. 
However, symmetric forms may not have an anisotropic vector
in even characteristic. 
\begin{lem}\label{lem:vectors}
There is a deterministic algorithm which, on input a nonzero quadratic form, finds a canonical nonsingular vector $v$  in $O(d^2)$ field operations. 
There is a deterministic algorithm which, on input a nonzero quadratic form in odd characteristic or a nonzero unitary form, finds a canonical anisotropic vector $w$ in $O(d^2)$ field operations. 
There is a Las Vegas algorithm which, on input a nondegenerate quadratic form $Q$ with $q$ odd and $d \geq 2$, finds canonical nonsingular vectors $u_1, u_2$ such that $\iota(Q(u_1)) = 0$ and $\iota(Q(u_2)) = 1$ in  $O(d^2 + \log q)$ field operations. 
\end{lem}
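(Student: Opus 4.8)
The plan is to read the required vectors directly off the entries of the matrix of the form, adjusting the choice with at most one further canonical field computation.

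For the first statement, write $M=(m_{ij})$ for the upper triangular matrix of $Q$, so that $Q(v_i)=m_{ii}$ and, for $i<j$, $Q(v_i+v_j)=m_{ii}+m_{jj}+m_{ij}$. If some $m_{ii}\ne 0$, return $v_i$ for the least such $i$; otherwise $Q\ne 0$ forces $M\ne 0$, so some $m_{ij}$ with $i<j$ is nonzero and we return $v_i+v_j$ for the lexicographically least such pair. This is deterministic and costs $O(d^2)$, the cost of scanning $M$. For the second statement, note that ``anisotropic'' and ``nonsingular'' coincide for a quadratic form, so in odd characteristic the vector just produced works. For a unitary form with matrix $F=(f_{ij})$ one has $\beta(v_i,v_i)=f_{ii}$, so if some $f_{ii}\ne 0$ return the first such $v_i$; otherwise take the lexicographically least $i<j$ with $f_{ij}\ne 0$, and note that $\beta(v_i+\la v_j,v_i+\la v_j)=\la^\sigma f_{ij}+\la f_{ji}$. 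If $f_{ij}+f_{ji}\ne 0$ set $\la=1$; otherwise $f_{ji}=-f_{ij}$, and $\la=\zeta$ works since then the value equals $(\zeta^\sigma-\zeta)f_{ij}\ne 0$ (as $\zeta\notin\F_q$). Return $v_i+\la v_j$; again everything is deterministic in $O(d^2)$.

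For the third statement I would first run the first algorithm to obtain the canonical nonsingular $v$, set $a:=Q(v)\ne 0$, and compute $\iota_0:=\iota(a)\in\{0,1\}$ in $O(\log q)$ field operations. As $q$ is odd, $\beta(v,v)=2a\ne 0$, so $V=\langle v\rangle\perp v^\perp$ and $Q$ restricts to a nondegenerate form on the $(d-1)$-dimensional space $v^\perp$. The crucial point is to locate a canonical nonsingular $w\in v^\perp$ without forming that restricted form in full: with $r:=vF$ (computable in $O(d)$ since $v$ has at most two nonzero coordinates) and $k_0$ the least index with $r_{k_0}\ne 0$, the vectors $b_k:=v_k-(r_k/r_{k_0})v_{k_0}$ for $k\ne k_0$ span $v^\perp$ and each has at most two nonzero coordinates, so each $Q(b_k)$ and each $\beta(b_k,b_l)$ costs $O(1)$. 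If some $Q(b_k)\ne 0$, take $w=b_k$ for the least such $k$; otherwise (possible only when $d\ge 3$) nondegeneracy of $\beta$ on $v^\perp$ gives, for $k_1$ the least index $\ne k_0$, some $l$ with $\beta(b_{k_1},b_l)\ne 0$, and we take $w=b_{k_1}+b_l$ for the least such $l$, which is nonsingular since $Q(w)=\beta(b_{k_1},b_l)$. Now set $b:=Q(w)\ne 0$. If $\iota(b)\ne\iota_0$ we are done, returning $v$ and $w$ in the order dictated by their classes. If $\iota(b)=\iota_0$ then $b/a$ is a square in $\F_q$; let $\mu\in\F_q^\times$ be its canonical square root (found by a Las~Vegas call to Theorem~\ref{quadratic}) and $\nu$ be as in Proposition~\ref{prop:canelts}(\ref{thm:canelts:nu}), and set $u':=v+(\nu/\mu)w$, so that $Q(u')=a+(\nu/\mu)^2b=a(1+\nu^2)$, which is nonzero and of class $1-\iota_0$; return $v$ and $u'$ in the order determined by $\iota_0$.

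The step I expect to be the main obstacle is not showing that $u_1$ and $u_2$ exist — a nondegenerate quadratic form in at least two variables over a finite field of odd order represents both square classes, and the construction above is explicit in any case — but keeping the whole of the third algorithm within $O(d^2+\log q)$ field operations while staying canonical. A naive restriction of $Q$ to $v^\perp$ would cost a matrix multiplication, so it is essential that $v$ and the chosen spanning vectors of $v^\perp$ are supported on a bounded number of coordinates and that one never needs more than a single nonsingular $b_k$ together with $O(d)$ of its pairings. Canonicity is then automatic: every choice (least index or pair, canonical square root, the elements of Proposition~\ref{prop:canelts}) is deterministic given the input, and the only randomisation enters through Theorem~\ref{quadratic}, whose output is itself canonical.
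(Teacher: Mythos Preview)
Your proof is correct and follows essentially the same route as the paper's. The first two parts match the paper almost verbatim (the paper simply always takes $\lambda=\zeta$ in the unitary off-diagonal case rather than first trying $\lambda=1$). For the third part the paper also finds a nonsingular $v_1$, passes to $v_1^\perp$, finds a nonsingular $v_2$ there, and then, if $\iota(Q(v_1))=\iota(Q(v_2))$, forms $v_1+\nu\sqrt{Q(v_1)/Q(v_2)}\,v_2$, exactly your $u'$; the only difference is that the paper phrases the second step as ``recursively choose nonsingular $v_2\in v_1^\perp$'' after computing the nullspace of $Fv_1^{\Tr}$, implicitly relying on the sparse nullspace basis to keep the restricted form within $O(d^2)$, whereas you make this sparsity explicit via the vectors $b_k$ and avoid writing down the restricted matrix at all. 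That extra care is a nice touch but does not change the argument.
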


\begin{proof}
We first discuss finding $v$ or $w$. To find $v$, let $M=(m_{ij})$ be the matrix of the quadratic form. To find $w$, let $M$ be the matrix of the polar form of $Q$ or of the unitary form.
To find $v$ or $w$, now look for the smallest $i$ such that $m_{ii} \neq 0$. If $i$ exists, take $v = v_i$ or $w = v_i$. 
If none exists, let $(i,j)$ be lexicographically minimal subject to $m_{ij} \neq 0$. Let
$v = v_i + v_j$, and in the quadratic case let $w = v_i +v_j$ also.  If $M$ is unitary, let $w = v_i + \zeta v_j$, so that $\beta(v,v)=\zeta+\zeta^q$, which is nonzero as observed in the proof of Proposition~\ref{prop:canelts}(1). 

To find $u_1$ and $u_2$, first choose $v_1$ nonsingular as above. Compute  $v_1^\perp$ as the nullspace of the column vector $Fv_1^\Tr$ in $O(d^2)$ field operations, then recursively choose nonsingular $v_2 \in v_1^\perp$: note that $v_2 \not\in \langle v_1 \rangle$ as $v_1$ is nonsingular. 
If possible, take $u_1=v_i$ for square $Q(v_i)$ and $u_2=v_j$ for nonsquare $Q(v_j)$. 
If this is not possible, then either the $Q(v_i)$ are both square, or both are nonsquare.
Let $w=v_1+\nu \sqrt{Q(v_1)/Q(v_2)}v_2$, where $\nu$ is as in Proposition~\ref{prop:canelts}. Then  $Q(w)= (1+\nu^2)Q(v_1)$ and hence $\iota(Q(w)) = 1$ if and only if $\iota(Q(v_1)) = 0$, so let $u_1$ be one of $w$ or $v_1$ and let $u_2$ be the other. 
\end{proof}

Next we present the main technical ingredient of our isometry construction algorithm. We deal uniformly with symplectic, unitary and symmetric bilinear forms, and refer to the symplectic case as \emph{case {\sf S}}. 
 We define the \emph{initial $k$-block} of a matrix $X$ to be the matrix consisting  of the first $k$ columns of the first $k$ rows of $X$. 
For a matrix over $\F_{q^2}$, the map $\sigma$ is the $q$th power map on matrix entries and so %
the application of $\sigma$ takes $O(\log q)$ field operations for each entry. For a matrix $X$, we write $X^\ast$ for $-X^\Tr$ in case {\sf S},  for $X^{\sigma \Tr}$ in the unitary case, and for $X^\Tr$ in the orthogonal case. Furthermore, we write $X^\dag$ for $X^\Tr$ in case {\sf S} and for $X^\ast$ otherwise. 
Let $a = \log q$ in the unitary case and $0$ otherwise. 
If $SAS^\dag =B$ we say that $S$ \emph{transforms} $A$ to $B$.
Note that we do not assume that our forms are nondegenerate, so  symplectic forms can have odd dimension.

\begin{thm}[Diagonalise forms]\label{thm:diag}
Let $A$ be the matrix of a (possibly degenerate) symmetric, unitary, or symplectic form over $\BBF_{q^u}$, where if $q$ is even then the form is unitary or symplectic.   There is a deterministic algorithm which, on input $A$, constructs a canonical $S \in \gl_d(q^u)$ such that $SAS^\dag$ is
diagonal, or block diagonal with blocks of size %
at most $2$ in case {\sf S}. The algorithm takes  $O(d^\omega + d^2a)$ field operations, where $a$ is $\log q$ in the unitary case and $0$ otherwise.
\end{thm}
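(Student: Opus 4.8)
The plan is to perform a symmetric variant of Gaussian elimination on the Gram matrix $A$, working ``in place'' on $A$ while accumulating the transforming matrix $S$, and to make every choice canonical by always selecting the lexicographically least available pivot. I would proceed recursively on the dimension $d$. At the start of stage $k$ we have transformed $A$ into $A_k\oplus 0$ after symmetrically clearing the first $k-1$ rows and columns, so it suffices to describe one stage on a form of dimension $d$ and then recurse on dimension $d-1$ (or $d-2$ in case {\sf S}). The recursion depth is $O(d)$, and each stage costs $O(d)$ operations for the pivot search, $O(d)$ for the row/column update plus $O(d\cdot a)$ for applying $\sigma$ in the unitary case, and $O(d)$ to update the relevant row of $S$; summing over stages gives $O(d^2+d^2a)$ for this naive version. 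The speedup to $O(d^\omega+d^2a)$ I would obtain exactly as in ordinary block $LU$-factorisation: split the space into two halves, diagonalise the first half recursively, apply the resulting block operation to the whole matrix using one $d\times d$ matrix multiplication (in the unitary case also applying $\sigma$, which costs $O(d^2 a)$), and then recurse on the Schur complement in the second half. One must check that the canonical pivot choices made by the naive algorithm agree with those made by the blocked algorithm, so that the output $S$ is unambiguously defined; this follows because the pivot search within the first half only ever inspects entries of the first half, which are untouched by later updates.

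The three form types are handled by the single $\dag$-notation already set up before the statement, so the only genuine case distinction is the pivot step. In the symmetric or unitary case, at stage $k$ I look for the smallest index $i\ge k$ with a nonzero diagonal entry $a_{ii}$; if one exists I swap it into position $k$ (a permutation applied symmetrically via $S$), and then for each $j>k$ subtract an $a_{jk}/a_{kk}$-multiple (suitably conjugated by $\sigma$ in the unitary case) of row/column $k$ from row/column $j$, which zeroes out the off-diagonal entries in row and column $k$ and leaves $a_{kk}$ isolated. If no diagonal entry in the active block is nonzero, then the form is alternating on that block; in the unitary case this cannot persist (an alternating Hermitian form is zero, since $a_{ii}=a_{ii}^q$ forces nothing but $a_{ij}+a_{ji}^q=0$ together with the diagonal entries vanishing can be repaired by the vector $v_i+\zeta v_j$ exactly as in the proof of Lemma~\ref{lem:vectors}), so after a single change of basis using $v_i+\zeta v_j$ for the lexicographically least $(i,j)$ with $a_{ij}\ne 0$ we create a nonzero diagonal entry and continue. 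In case {\sf S} the form is always alternating, so there are never nonzero diagonal entries to exploit; instead I take the lexicographically least $(i,j)$, $i<j$, with $a_{ij}\ne 0$, move it to positions $(k,k+1)$, scale so that $a_{k,k+1}=1$, and then use this hyperbolic pair to clear all of rows and columns $k,k+1$ against the rest, leaving a $2\times 2$ block and recursing on dimension $d-2$. If the whole active block is zero we are already done (the remaining diagonal blocks are $0$).

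The main obstacle I anticipate is not the linear algebra but the bookkeeping needed to guarantee the output is genuinely \emph{canonical} and that the blocked (fast) version produces exactly the same $S$ as the naive version; this requires being careful that ``lexicographically least pivot'' is defined with respect to a fixed ordering of indices and of $\F_{q^u}$, that scaling a pivot to $1$ is done by a deterministic rule, and that in the alternating/case-{\sf S} sub-case the repair vector is chosen by a fixed recipe independent of any random bits. Since no random choices enter anywhere — the pivot searches, the swaps, the elimination coefficients and the single $v_i+\zeta v_j$ repair are all forced — the algorithm is in fact deterministic, as claimed. The field-operation count then follows by the standard block-recursion recurrence $T(d)=2T(d/2)+O(d^\omega+d^2 a)=O(d^\omega+d^2 a)$, using Theorem~\ref{mat_complexity} for the matrix multiplications and the fact that a single application of $\sigma$ to a $d\times d$ matrix costs $O(d^2\log q)=O(d^2 a)$ operations in the unitary case and is free otherwise.
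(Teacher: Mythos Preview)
Your divide-and-conquer outline has a real gap, not merely bookkeeping. The blocked recursion you describe (diagonalise the top $d/2\times d/2$ block, form the Schur complement, recurse on the bottom) only works if that top block is invertible after you have processed it. But the restriction of a nondegenerate form to the first $d/2$ standard basis vectors can be highly degenerate, even identically zero: take for instance the symmetric (or symplectic) matrix
\[
A=\begin{pmatrix}0&I_{d/2}\\ I_{d/2}^\ast &0\end{pmatrix}.
\]
Your recursive call on the top block returns the zero form unchanged; there is then no Schur complement to pass to the second call, and the recursion makes no progress. More generally, if the top block has rank $r<d/2$ you can only peel off $r$ rows, leaving a $(d-r)\times(d-r)$ problem; in the worst case $r=0$ at every level and the recurrence $T(d)=T(d/2)+T(d)+O(d^\omega)$ is vacuous. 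Ordinary block $LU$ faces exactly this difficulty when leading principal minors vanish, and standard symmetric-indefinite pivoting (Bunch--Kaufman and variants) is not obviously $O(d^\omega)$.

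The paper deals with precisely this obstruction and it is the main content of the proof. Before recursing, it \emph{forces} the initial $\lfloor d/2\rfloor$-block to be invertible (Lemma~\ref{L4}), using a subroutine (Lemma~\ref{L3}) that handles the extremal case where the top block is zero but the off-diagonal block has full row rank. Only then does it decouple via a single Schur-complement step (Lemma~\ref{L1}) and recurse on two halves of genuine size $\le d/2$, giving the recurrence $T(d)=2T(d/2)+O(d^\omega+d^2a)$. Your ``the pivot search within the first half only ever inspects entries of the first half'' claim is also false for the naive algorithm you wrote down: at stage~$1$ you search for the least $i$ with $a_{ii}\ne 0$ over all $i\ge 1$, so on the matrix $A$ above the naive algorithm immediately reaches past position $d/2$, and the blocked version cannot reproduce its choices. (Incidentally, one elimination stage costs $O(d^2)$, not $O(d)$, so the naive algorithm is $O(d^3)$, not $O(d^2)$; this does not affect the final bound but is worth correcting.)
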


We prove the result via a sequence of lemmas.   

\begin{lem}\label{L1} Let $A$ be a matrix of the form
$$\left( \begin{array}{ccc}
A_1  & 0 & A_2 \\
0 & 0 & A_3 \\
A_2^\ast & A_3^\ast & A_4 \end{array} \right),$$ where $A_1 \in \gl_{k}(q^u)$ for $1 \leq k \leq d-1$ (with $k$ even in case {\sf S}) and $A_3$ has $0 \leq s <  d-k$ rows. There is a deterministic algorithm which, on input $A$, constructs a canonical $S \in \gl_d(q^u)$ such that $$SAS^{\dag} = 
A_1 
\oplus \left( \begin{array}{cc}
 0 & A_3 \\
 A_3^\ast & A_5 \end{array} \right).$$
The algorithm takes $O(d^\omega + d^2a)$ field operations.
\end{lem}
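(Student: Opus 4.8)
The plan is to clear out the block $A_2$ using the invertibility of $A_1$, by applying a suitable ``Schur complement'' transformation that fixes the top $k$ coordinates. Concretely, I would look for $S$ of block lower-triangular shape
$$
S \;=\; \begin{pmatrix} I_k & 0 & 0 \\ 0 & I_s & 0 \\ C & 0 & I_{d-k-s} \end{pmatrix},
$$
so that $S$ is automatically in $\gl_d(q^u)$ and acts as the identity on the first $k$ rows. Computing $SAS^\dag$ and demanding that the $(3,1)$ block vanish forces $C A_1 + A_2^\ast = 0$, i.e. $C = -A_2^\ast A_1^{-1}$; note this also kills the symmetric $(1,3)$ block, since $A^\dag$-conjugation by such $S$ keeps the middle row/column untouched and $(A_2 + A_1 C^\dag)$ is the conjugate-transpose (in the appropriate sense) of the $(3,1)$ block. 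The surviving $(3,3)$ block becomes $A_5 = A_4 + C A_2 + A_2^\ast C^\dag + C A_1 C^\dag$, and the middle $0$ block, the $A_3$ blocks, and $A_1$ are all preserved by the shape of $S$. So the only real content is checking that this block computation goes through in each of the three cases {\sf S}, unitary, orthogonal simultaneously, using the definition of $X^\ast$ and $X^\dag$; the dagger/star bookkeeping is where one has to be a little careful, especially reconciling the symplectic sign convention $X^\ast = -X^\Tr$ with $X^\dag = X^\Tr$.

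For canonicity, observe that $S$ is determined by the formula $C = -A_2^\ast A_1^{-1}$, which involves no random or arbitrary choices: $A_2^\ast$ is the fixed $\ast$-operation applied to $A_2$, $A_1^{-1}$ is the unique inverse, and the result is a deterministic function of the input matrix $A$. Thus the output is canonical in the required sense. I would then remark that $A_5$ need not be recorded separately in the statement — it is simply whatever the $(3,3)$ block of $SAS^\dag$ turns out to be — so there is nothing further to pin down.

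For the complexity bound, the dominant cost is forming $A_1^{-1}$ and the products $A_2^\ast A_1^{-1}$ and then $C A_1 C^\dag$ (and assembling $SAS^\dag$): each is a handful of $d\times d$ matrix multiplications or an inversion, hence $O(d^\omega)$ field operations by Theorem~\ref{mat_complexity}. In the unitary case one additionally applies $\sigma$ entrywise to form $A_2^\ast$, $C^\dag$, etc., costing $O(d^2\log q) = O(d^2 a)$ field operations; in the other cases $a=0$ and there is no such term. Summing gives the claimed $O(d^\omega + d^2 a)$.

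The main obstacle I anticipate is not conceptual but notational: verifying the $(3,3)$-block identity and the vanishing of the off-diagonal blocks simultaneously for symplectic (where $\dag = \Tr$ but $\ast = -\Tr$), unitary (where $\dag = \ast = \sigma\Tr$), and orthogonal (where $\dag = \ast = \Tr$) forms, keeping the signs and the $\sigma$'s consistent. Once the case analysis is organized around the single relation $(X^\dag)^\ast = (X^\ast)^\dag = \pm X$ with the sign depending only on case {\sf S}, the computation should be routine; I would present it once in the ``generic'' notation and note the one sign that must be tracked in case {\sf S}.
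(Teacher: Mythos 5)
Your proposal is correct and takes exactly the same approach as the paper: the paper's proof consists solely of exhibiting the matrix $S$ with $(3,1)$-block $-A_2^\ast A_1^{-1}$, which is precisely the $S$ you derive. Your additional verification that the $(1,3)$-block also vanishes (using that $A_1^\ast = A_1$ so that $C^\dag = -A_1^{-1}A_2$ in every case) and your complexity accounting are both correct and simply spell out what the paper leaves implicit.
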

\begin{proof}
Let $S = \left( \begin{smallmatrix} I_{k} & 0 & 0 \\
0 & I_s & 0 \\
-A_2^{\ast} %
 A_1^{-1} & 0 & I_{d- k - s} \end{smallmatrix} \right)$.
\end{proof}

\begin{lem}\label{L2} There is a deterministic algorithm which, on input  $A \neq 0$, constructs a canonical
 $S \in \gl_d(q^u)$ such that $SAS^{\dag} = 
 A_1 \oplus 0$ with $A_1 \in \gl_{k}(q^u)$ for some  $1 \leq k \leq d$ (with $k$ even in case {\sf S}). The algorithm takes $O(d^\omega)$ field operations.
\end{lem}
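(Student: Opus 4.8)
The plan is to reduce, via a change of basis, to the situation of Lemma~\ref{L1}, and then apply that lemma. First I would handle the case $d=1$ separately, since then $A=(a)$ with $a \neq 0$ and there is nothing to do (take $S = I_1$); note that $d=1$ cannot occur in case {\sf S}, since a nonzero $1\times 1$ alternating matrix does not exist. So assume $d \geq 2$.

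The key step is to find a canonical vector $v$ with $v A v^\dag \neq 0$ (an anisotropic vector for the form), if one exists. In case {\sf S} we cannot hope for this, so we instead look for a pair $i<j$ with $A_{ij} \neq 0$ and work with the $2$-dimensional block $\langle v_i, v_j\rangle$ on which the form is nondegenerate; in the unitary and orthogonal (odd $q$) cases a $1$-dimensional anisotropic space exists unless the form is identically zero, in which case $A=0$ is excluded by hypothesis and we are done with $k=0$ after permuting — wait, rather: if no anisotropic vector exists in the symmetric case with $q$ odd this cannot happen, and in case {\sf S} the block is $2$-dimensional. Concretely, I would mimic the argument of Lemma~\ref{lem:vectors}: scan for the lexicographically least $i$ with $A_{ii}\neq 0$ and set $v=v_i$, $k=1$; failing that, scan for the lexicographically least pair $(i,j)$ with $A_{ij}\neq 0$ and, in case {\sf S}, set the first block to be rows/columns $i,j$ with $k=2$, while in the unitary case set $v = v_i + \zeta v_j$ with $k = 1$ (using $\zeta + \zeta^q \neq 0$). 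Since $A \neq 0$, such an index or pair always exists. Then conjugate $A$ by the permutation (or the elementary) matrix $P$ that moves this block to the top-left corner, so that $PAP^\dag$ has invertible initial $k$-block $A_1$; this costs $O(d^2)$ field operations for constructing $P$ and $O(d^\omega)$ for the product, plus $O(d^2 a)$ for applying $\sigma$ in the unitary case.

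Once $A_1$ is invertible and in the top-left corner, I would peel off the first column-block of $A_1^\ast$ below it — but in fact the cleaner route is: apply Lemma~\ref{L1} with the ``middle'' block of zero rows empty ($s=0$), which transforms $PAP^\dag$ into $A_1 \oplus A_5$ for some symmetric/unitary/alternating $A_5$ of dimension $d-k$. Then recurse on $A_5$: if $A_5 = 0$ we stop, otherwise repeat. Each recursive call strips off a block of size $1$ or $2$, so there are at most $d$ calls, and the total cost is dominated by the first call at $O(d^\omega + d^2 a)$; subsequent calls on smaller matrices contribute a geometric sum bounded by the same order. The output $S$ is the product of all the elementary and permutation matrices used, and it is canonical because every choice made (which index, which pair, the ordering) is canonical.

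The main obstacle I anticipate is bookkeeping the complexity so that the recursion genuinely sums to $O(d^\omega + d^2 a)$ rather than picking up an extra factor of $d$: naively, $d$ recursive calls each costing $O(k_i^\omega + k_i^2 a)$ where $k_i$ is the \emph{remaining} dimension at step $i$ gives $\sum_i (d - \text{something})^\omega$, which could be $O(d^{\omega+1})$ if done carelessly. The fix is to observe that Lemma~\ref{L1}'s cost at a recursion step where the ambient dimension is $d'$ is $O((d')^\omega + (d')^2 a)$, and since the dimension drops by at least $1$ each time, $\sum_{d'=1}^{d} (d')^\omega \le d \cdot d^\omega$ is too weak — so instead one must either argue that after the \emph{first} reduction the matrix $A_5$ is small only if many blocks came off at once, or, more simply, rephrase the whole diagonalisation as $O(d)$ applications of Lemma~\ref{L1} each performed \emph{within the original $d$-dimensional space} but touching only the lower-right $(d')\times(d')$ block, and bound the work by noting each entry of the evolving matrix is modified $O(d)$ times at $O(1)$ cost, giving $O(d^2)$ elementary updates plus $O(1)$ matrix multiplications of size $d$, hence $O(d^\omega + d^2 a)$ overall. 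Alternatively — and this is likely what the authors do — one shows the transformation $S$ for the whole process can be assembled and applied with $O(1)$ many $d\times d$ matrix multiplications by batching, so the $d^\omega$ term appears only once. I would flag this as the point requiring the most care in the write-up.
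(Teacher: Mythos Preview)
Your approach is a Gram--Schmidt-style iteration: peel off a $1\times 1$ (or $2\times 2$ in case~{\sf S}) nondegenerate block, apply Lemma~\ref{L1} to split it off, and recurse on the complement. This is mathematically correct and produces the desired block-diagonal shape, but it does \emph{not} meet the stated complexity bound, and you correctly identified this as the weak point. Your first claim that ``subsequent calls on smaller matrices contribute a geometric sum'' is simply false: the remaining dimensions are $d, d-1, d-2,\ldots$ (or $d, d-2, \ldots$), so $\sum (d')^\omega = \Theta(d^{\omega+1})$, not a geometric series. Your attempted fix, that each entry is touched $O(d)$ times at $O(1)$ cost, gives $d^2 \cdot d = O(d^3)$ total work, which matches $O(d^\omega)$ only for the na\"ive $\omega=3$, not for fast matrix multiplication. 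The ``batching'' suggestion is too vague to rescue this: there is no obvious way to collapse $d$ sequential rank-one Schur-complement updates into $O(1)$ products of $d\times d$ matrices.

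The paper's proof avoids recursion entirely and is much shorter. Compute a single $S$ putting $A$ in row echelon form (Theorem~\ref{mat_complexity}, cost $O(d^\omega)$), so that $SA = \left(\begin{smallmatrix} X \\ 0 \end{smallmatrix}\right)$ with $X$ of full row rank $k$. Then $Y := SAS^\dag$ still has its last $d-k$ rows zero; but $Y = Y^\ast$, so its last $d-k$ columns are zero as well, forcing $Y = A_1 \oplus 0$ with $A_1$ the initial $k$-block. Since $\rank Y = \rank A = k$, this $A_1$ is invertible, and in case~{\sf S} the rank of an alternating matrix is automatically even. One echelon reduction plus one product, and the symmetry of the form does the rest.
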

\begin{proof}
Let $S \in \gl_d(q^u)$ be such that $SA$ is in row echelon form,  constructed in $O(d^\omega)$ field operations by Theorem~\ref{mat_complexity}. Then $$SAS^{\dag} = \left( \begin{array}{c} X \\ 0 \end{array} \right) S^{\dag} = Y$$ for some  matrix $X_{k \times d}$ with full row rank. Now, $Y$ has its final $d-k$ rows all zero, and $Y = Y^\ast$. Thus the final $d-k$ columns of $Y$ are all zero, and the initial $k$-block of $Y$ is in $\gl_k(q^u)$. 
\end{proof}

\begin{lem}\label{L3}
Let $d  \equiv 0 \bmod 4$ in case {\sf S}, and let $d$ be even otherwise. There is a deterministic algorithm which,  on input
$$A = \left( \begin{array}{cc} 
0 & A_1 \\
A_1^{\ast} & A_2 \end{array} \right)$$ with $A_1 \in \gl_{d/2}(q^u)$, constructs a canonical $S \in \gl_d(q^u)$ such that the initial $(d/2)$-block of $SAS^{\dag}$ is invertible. The algorithm takes $O(d^\omega + d^2a)$ field operations.
\end{lem}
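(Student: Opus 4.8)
The plan is to realise $S$ as a product $S=S_2S_1$ of two congruence transformations. Take $S_1=\left(\begin{smallmatrix} I & 0\\ C & I\end{smallmatrix}\right)$, where $I$ is the identity of size $d/2$ and $C$ is to be determined; a direct block computation shows that $S_1AS_1^\dag$ keeps the off-diagonal blocks $0$ and $A_1$ of $A$ while replacing the bottom-right block $A_2$ by $A_2+CA_1+A_1^\ast C^\dag$. Then take $S_2=\left(\begin{smallmatrix} 0 & I\\ I & 0\end{smallmatrix}\right)$, which merely interchanges the two block rows and columns, so that $S_2\bigl(S_1AS_1^\dag\bigr)S_2^\dag$ has that modified block as its initial $(d/2)$-block. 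Since $\det S_1=\pm1$ and $\det S_2=\pm1$ we get $S=S_2S_1=\left(\begin{smallmatrix} C & I\\ I & 0\end{smallmatrix}\right)\in\gl_d(q^u)$, and $(S_2S_1)^\dag=S_1^\dag S_2^\dag$ because $\dag$ is an anti-homomorphism. So it suffices to choose $C$ canonically so that $A_2+CA_1+A_1^\ast C^\dag=N$ for a fixed invertible matrix $N$ of the appropriate symmetry type: I would take $N=I_{d/2}$ except in case {\sf S}, where I would take $N$ to be the matrix of the canonical symplectic form of Definition~\ref{def:standard_forms} in dimension $d/2$, which is even since $d\equiv0\bmod4$.

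Since $A_1$ is invertible, as $C$ runs over all $(d/2)\times(d/2)$ matrices so does $M:=CA_1$. Using that $\dag$ is an anti-homomorphism together with $A_1^\dag=A_1^\ast$ in the unitary and orthogonal cases and $A_1^\dag=-A_1^\ast$ in case {\sf S}, one computes $A_1^\ast C^\dag=M^\ast$ outside case {\sf S} and $A_1^\ast C^\dag=-M^\Tr$ in case {\sf S}. So, writing $H:=N-A_2$, the equation to solve is $M+M^\ast=H$ outside case {\sf S} and $M-M^\Tr=H$ in case {\sf S}, after which $C=MA_1^{-1}$ is found in $O(d^\omega)$ field operations by Theorem~\ref{mat_complexity}. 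In both situations $H$ has exactly the symmetry its equation demands (Hermitian or symmetric outside case {\sf S}, alternating in case {\sf S}), since $N$ and $A_2$ are diagonal blocks of form matrices of the right type.

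It then remains only to solve for $M$ canonically. If $q$ is odd I would simply set $M:=\tfrac12 H$, which works uniformly. If $q$ is even then, by hypothesis, the form is unitary or symplectic and $H$ is symmetric, with $H_{ji}=H_{ij}^\sigma$ in the unitary case and zero diagonal in case {\sf S}; I would set $M_{ij}:=H_{ij}$ for $i<j$ and $M_{ij}:=0$ for $i>j$, and on the diagonal set $M_{ii}:=0$ in case {\sf S}, while in the unitary case (where $H_{ii}\in\F_q$) let $M_{ii}$ be the canonical solution of the trace equation $M_{ii}+M_{ii}^\sigma=H_{ii}$ supplied by Proposition~\ref{prop:norm}. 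A direct check confirms $M+M^\ast=H$, respectively $M-M^\Tr=H$. Every step is deterministic and $S$ depends only on $A$, so $S$ is canonical; counting the $O(\log q)$ cost of each field-automorphism application in the unitary case, the total is $O(d^\omega+d^2a)$ field operations as claimed.

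The block matrix algebra is routine; the parts I expect to need care are the bookkeeping of the involution $\ast$ against $\dag$ across the three families — in particular the sign in case {\sf S} that turns the symmetric equation $M+M^\ast=H$ into the skew equation $M-M^\Tr=H$ — and the characteristic-two unitary subcase, where one cannot divide by $2$ and must instead invoke the canonical trace-equation solver of Proposition~\ref{prop:norm} to determine the diagonal of $M$. That last point is the main obstacle to a fully uniform argument.
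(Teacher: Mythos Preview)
Your argument is correct and takes a genuinely different route from the paper. The paper first applies Lemma~\ref{L2} to the block $A_2$ to put it in the shape $A_3\oplus 0$, then normalises the off-diagonal block to $I_{d/2}$, and finally appeals to a ``routine'' explicit $S_2$ for the resulting very special matrix. You instead bypass Lemma~\ref{L2} entirely: a lower-unitriangular congruence lets you \emph{prescribe} the bottom-right block to be any fixed invertible form matrix $N$ of the correct symmetry type, after which a block swap finishes. This is more self-contained and arguably cleaner, since it reduces the problem to the linear equation $M+M^\ast=H$ (or $M-M^\Tr=H$ in case {\sf S}) whose solvability is transparent once you observe that $H$ inherits the right symmetry from $N$ and $A_2$. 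The paper's route has the mild advantage of reusing an already-proved lemma and avoiding the characteristic-two case analysis, but your direct approach makes the ``routine'' step completely explicit.

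One small point: Proposition~\ref{prop:norm} is stated for $\alpha\in\F_q^\times$, so in the even-characteristic unitary case you should say explicitly that when $H_{ii}=0$ you take $M_{ii}=0$; this keeps the choice canonical and deterministic.
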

\begin{proof}
First use Lemma~\ref{L2} to construct $U \in \gl_{d/2}(q^u)$ in $O(d^\omega)$ such that $UA_2U^{\dag} = A_3 \oplus 0$, 
with $A_3 \in \gl_{k}(q^u)$ for some $k \leq d/2$ (and $k$ even in  case {\sf S}). Construct $S_1 = (A_1 U^{\dag})^{-1} \oplus U$ in $O(d^\omega + ad^2)$ field operations, then $$B:= S_1A S_1^{\dag} = \left( \begin{array}{cc}
0 & I_{d/2} \\
I_{d/2}^\ast & 
A_3 \oplus 0 \end{array} \right).$$ It is now routine to construct a canonical $S_2$ such that $S_2 B S_2^\dag$ has invertible initial $(d/2)$-block. 
\end{proof}

\begin{lem}\label{L4}
Let $l$ with $1 \leq l \leq d-1$ be given, with $l$ even in case {\sf S}.  There is a deterministic algorithm which, on input an invertible matrix  $A$, constructs a canonical $S \in \gl_d(q^u)$ such that the initial $l$-block of $SAS^{\dag}$ is invertible. The algorithm takes $O(d^\omega + d^2a)$  field operations.
\end{lem}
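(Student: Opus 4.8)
The plan is to reduce to the cases already handled by Lemmas~\ref{L1}--\ref{L3} by an induction on $d-l$, exploiting the fact that $A$ is invertible. First I would handle the base case $l = d-1$ (or $l=d-2$ in case {\sf S}, respecting the parity constraint): here I want the initial $l$-block of $SAS^\dag$ to be invertible, which amounts to arranging that the last basis vector (or last pair, in case {\sf S}) is ``non-degenerate against the rest'' in the appropriate sense. Since $A$ is invertible, the radical of the form restricted to any hyperplane is at most one-dimensional, so there is a canonical choice of $S$ — for instance, find the lexicographically first index (or pair of indices) witnessing that some $(d-1)$-block, or a small modification of it, is invertible, using Theorem~\ref{mat_complexity} to test invertibility in $O(d^\omega)$ field operations. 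In case {\sf S} one may need to split off a hyperbolic pair canonically instead of a single vector.

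For the inductive step, suppose the initial $l$-block is to be made invertible and $l < d-1$. Apply Lemma~\ref{L1} or Lemma~\ref{L4} recursively to first make the initial $(l+1)$-block (or $(l+2)$-block in case {\sf S}) invertible, then peel off a canonical one-dimensional (resp.\ two-dimensional) invertible summand using a change of basis of the kind built in the proofs of Lemmas~\ref{L1} and~\ref{L2}: having the initial $(l+1)$-block invertible lets me apply a Lemma~\ref{L1}-style transform to zero out the off-diagonal coupling between the first $l$ coordinates and coordinate $l+1$, leaving the initial $l$-block to be rectified on a smaller space. Alternatively, and perhaps more cleanly, I would argue directly: since $A$ is invertible, among all $\binom{d}{l}$ principal $l$-blocks at least one is invertible over $\F_{q^u}$ — this is a standard fact about non-degenerate forms, that every non-degenerate form has a non-degenerate subspace of each dimension up to a small correction — so I can canonically choose a permutation matrix $P$ moving an appropriate $l$-subset into the first $l$ positions, then clean up with one application of a Lemma~\ref{L1}-type matrix. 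Each such step costs $O(d^\omega + d^2 a)$ field operations (the $d^2 a$ term coming from applying $\sigma$ entrywise in the unitary case when forming products like $A_2^\ast A_1^{-1}$), and only $O(1)$ steps are needed, so the total cost is $O(d^\omega + d^2 a)$ as claimed.

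The main obstacle I anticipate is the \emph{canonicity} requirement interacting with case {\sf S}: in the symplectic case blocks must have even size, so I cannot peel off a single coordinate, and degenerate symplectic forms of odd dimension are explicitly allowed, so I must be careful that the invertible $l$-block I select is compatible with the parity constraint and that the selection rule (lexicographically least witnessing subset, say) is genuinely well-defined and deterministic. A secondary subtlety is verifying that ``some principal $l$-block is invertible'' really does hold for every non-degenerate form of each relevant type over $\F_{q^u}$ with the parity restriction — this needs the classification of forms from Definition~\ref{def:standard_forms} and the fact that the canonical forms visibly have invertible initial $l$-blocks of every admissible size, so any isometric form does too after a suitable (canonically chosen) basis permutation. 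Once that structural fact is in hand, the algorithmic part is just bookkeeping with the transforms of Lemmas~\ref{L1}--\ref{L3}.
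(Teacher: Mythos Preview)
Your proposal has two genuine gaps. First, the inductive scheme does not meet the stated complexity bound. The recursion you set up is $T(d,l)=T(d,l+1)+T(l+1,l)+O(d^\omega+d^2a)$, which unrolls to $O\bigl((d-l)(d^\omega+d^2a)\bigr)$, i.e.\ $O(d^{\omega+1})$ when $l$ is small; the assertion that ``only $O(1)$ steps are needed'' is not supported by the induction you describe. Second, your alternative route rests on a false structural claim: it is \emph{not} true that every non-degenerate form has an invertible principal $l$-block for each admissible $l$. The canonical form of type $+$ in dimension $d=2m$ has matrix $\antidiag(1,\dots,1)$, whose initial $m$-block is identically zero; already in dimension~$2$ the form $\left(\begin{smallmatrix}0&1\\1&0\end{smallmatrix}\right)$ has no invertible principal $1$-block. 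So a permutation alone cannot succeed, and even if it could, a search over $\binom{d}{l}$ subsets is exponential. Your base case suffers from the same problem: there need not be a coordinate hyperplane on which the form is non-degenerate.

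The paper's proof avoids recursion on $d-l$ altogether. It first arranges (by a permutation for $l>1$, or via Lemma~\ref{lem:vectors} for $l=1$) that the initial $l$-block $B_1$ is nonzero, then applies Lemma~\ref{L2} to $B_1$ to extract an invertible $k\times k$ piece $C_1$ with $k\le l$. Lemma~\ref{L1} splits off $C_1$, and invertibility of $A$ then forces the residual $(l-k)\times(d-l)$ block $D_1$ to have full row rank. One column operation puts $D_1$ into the shape $(E_1\ E_2)$ with $E_1$ square and invertible, which is exactly the hypothesis of Lemma~\ref{L3}; a single application of that lemma produces an invertible initial $(l-k)$-block in the complement of $C_1$. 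Thus only a bounded number of applications of Lemmas~\ref{L1}--\ref{L3} are used, giving the claimed $O(d^\omega+d^2a)$ bound.
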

\begin{proof}
If $l > 1$ then first construct a canonical permutation matrix $S_1$ transforming $A$ to a matrix $B$ whose initial $l$-block is not identically zero.
If $l = 1$ and $a_{11} = 0$  then construct a canonical anisotropic vector $v$ in $O(d^2)$ field operations, by Lemma~\ref{lem:vectors}, and let $B$ be the form resulting from  swapping this $v$ with $v_1$. 
 Let $$B = \left( \begin{array}{cc}
B_1 & B_2 \\
B_2^{\ast} & B_3 \end{array} \right),$$
where $B_1$ is $l \times l$.  If $B_1$ is invertible, we are done. Otherwise, 
construct a matrix $S_2$ such that 
$$C:= S_2 B S_2^{\dag} = \left( \begin{array}{cc}
C_1 \oplus 0 & C_2 \\
C_2^{\ast} & B_3 \end{array} \right),$$
where $C_1  = C_1^\ast \in \gl_{k}(q^u)$ for some $k < l$ (with $k$ even in case {\sf S}). The matrix $C$ can be computed in $O(d^\omega + ad^2)$ field operations by Lemma~\ref{L2}.  Since $C_1$ is invertible, by Lemma~\ref{L1} in $O(d^\omega + ad^2)$ field operations we construct a matrix $S_3$ such that 
$$D:= S_3 C S_3^{\dag} = C_1 \oplus \left( \begin{array}{cc}
 0 & D_1 \\
D_1^{\ast} & D_2 \end{array} \right),$$
where $D_1$ is $(l-k) \times (d-l)$. The fact that $A$ and $C_1$ are both invertible implies that $D_1$ has full row rank, so construct a matrix $P \in \gl_{d-l}(q^u)$ in $O(d^\omega)$ field operations such that $D_1 P = (E_1 \ E_2)$ with $E_1 \in \gl_{l-k}(q^u)$. Let $S_4:= I_{l} \oplus P^{\dag}$. Then 
$$E:= S_4DS_4^{\dag} = 
C_1\oplus\left( \begin{array}{cccc}
0 & E_1 & E_2 \\
E_1^{\ast} & E_3 & E_4 \\
E_2^{\ast} & E_4^{\ast} & E_5 \end{array} \right),$$ where $E_3$ is $(l-k) \times (l-k)$. By Lemma~\ref{L3}, in $O(d^\omega + ad^2)$ field operations we can construct a $2(l-k) \times 2(l-k)$ matrix $M$ such that 
$$M \left( \begin{array}{cc} 0 & E_1 \\
E_1^\ast & E_3 \end{array} \right) M^\dag$$
has initial $(l-k)$-block invertible. Let 
 $S_5 = I_{k} \oplus M \oplus I_{d - 2l + k}$, then
$S_5 E S_5^{\dag}$ %
has invertible initial $l$-block. 
\end{proof}

\begin{proof}[Proof of Theorem~\ref{thm:diag}] 
If $A$ is identically zero, there is nothing to do. Otherwise, by %
 Lemma~\ref{L2}, in $O(d^\omega + d^2a)$ field operations we can transform $A$ to $S_1AS_1^\dag = A_1 \oplus 0$ with $A_1 \in \gl_{r}(q^u)$ for some $r \leq d$, with $r$ even in case {\sf S}.
 Then by Lemma~\ref{L4}, in $O(d^\omega + d^2a)$ field operations we can construct a matrix $S_2$ transforming $A_1$ to a matrix $A_2$ whose initial $k$-block $B_1$ is invertible, where $k = 2\lfloor r/4 \rfloor$ in case {\sf S} and $k = \lfloor r/2 \rfloor$ otherwise. Now by Lemma~\ref{L1}, in $O(d^\omega + d^2a)$ field operations we can construct a matrix $S_3$ transforming $A_2$ to $B_1 \oplus C_1$, where $C_1 = C_1^\ast \in \gl_{r %
-k}(q^u)$. We now recurse on $B_1$ and $C_1$, stopping when we reach $2 \times 2$ matrices in case {\sf S} or $1 \times 1$ matrices otherwise. The whole process completes in $O(d^\omega + d^2a)$ field operations and produces canonical matrices at each step. 
\end{proof}

We remark that the symmetric case of the above theorem is proved in \cite[Theorem 16.25]{Burgisser90}, although we correct several minor errors in the proof. 

\begin{thm}[Transform forms]\label{thm:fix_form_detail}
Suppose we have two nondegenerate symplectic, unitary, or quadratic forms on the space $V=(\F_{q^u})^d$.
We can determine if they are isometric, and find a canonical isometry between them, in $O(C)$ field operations, where $C$ is given in Table~\ref{tab:fix}.
The algorithm used is deterministic for symplectic forms; otherwise it is Las Vegas.
\end{thm}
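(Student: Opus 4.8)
The plan is to reduce each of the two given forms, separately, to one of the canonical forms of Definition~\ref{def:standard_forms} by constructing a canonical change-of-basis matrix, and then to compare the results. Two forms are isometric precisely when they reduce to the same canonical form --- equivalently, when they have the same type and, in the quadratic case, the same discriminant --- and when this happens the composite of one reduction with the inverse of the other is an isometry between the original forms. Since every step below produces canonical output, so does the composite, and matrix inversion and multiplication are $O(d^\omega)$, well within budget. So it suffices to describe, for each type, a canonical algorithm that transforms the Gram matrix $F$ (and, for quadratic forms, the matrix $M$) to the appropriate canonical matrix.

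The first step is to diagonalise. In the symplectic, unitary, and odd-$q$ quadratic cases I apply Theorem~\ref{thm:diag}: in case {\sf S} and in the unitary case directly to $F$, and for a quadratic form with $q$ odd to its polar form $F=2M$, which simultaneously diagonalises $Q$ since $M=\tfrac12 F$. When $q$ is even and the form is quadratic, Theorem~\ref{thm:diag} does not apply to the symmetric matrix $F$; instead I use that $F$ is alternating and nondegenerate, run the case {\sf S} algorithm on it to obtain a block-diagonal matrix with nonzero $2\times 2$ blocks, and observe that the identity $Q(u+u')=Q(u)+Q(u')+\beta(u,u')$ then exhibits $Q$ as an orthogonal sum of nondegenerate binary forms, one per block. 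By Theorem~\ref{thm:diag} this costs $O(d^\omega+d^2a)$ field operations.

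What remains is to normalise the diagonal (or binary-block) form, type by type. \textbf{Symplectic:} rescale each $2\times 2$ alternating block to $\left(\begin{smallmatrix}0&1\\-1&0\end{smallmatrix}\right)$ and permute the basis into the order $(e_1,\dots,e_m,f_m,\dots,f_1)$; no equations are solved, so this case is deterministic, at cost $O(d^2)$. \textbf{Unitary:} the diagonal entries $a_i$ lie in $\F_q^\times$, and rescaling $v_i$ by $\eta_i^{-1}$ with $\eta_i^{q+1}=a_i$ a canonical norm-equation solution (Proposition~\ref{prop:norm}) makes every entry $1$; then fold consecutive pairs $\langle 1\rangle\perp\langle 1\rangle$ into hyperbolic planes, using an isotropic vector $v_i+cv_j$ with $c^{q+1}=-1$ (another norm equation) and completing to a hyperbolic pair by linear algebra, keep the leftover $\langle 1\rangle$ in odd dimension, and permute. \textbf{Quadratic, $q$ odd:} here $Q(v_i)=c_i/2$; use $\iota$ to test squareness and a canonical square root in $\F_q$ to rescale each block to $\langle 1\rangle$ or $\langle \xi\rangle$, combine same- or opposite-type pairs into hyperbolic planes according to $q\bmod 4$, convert these to explicit hyperbolic pairs with one further square root per pair, and put the one- or two-dimensional residual anisotropic part into the shape prescribed by Definition~\ref{def:standard_forms} (using $\gamma$ from Proposition~\ref{prop:canelts}(\ref{thm:canelts:gammaodd}) for $-$ type), recording the discriminant $\iota(\det F)$. \textbf{Quadratic, $q$ even:} each binary block is hyperbolic or anisotropic; combine anisotropic blocks in pairs, rescale each remaining block to the standard hyperbolic pair, and put the at most one leftover anisotropic block into canonical form using $\gamma$ from Proposition~\ref{prop:canelts}(\ref{thm:canelts:gammaeven}); recognising which case a block falls into, and the rescalings, need $O(1)$ quadratic equations per block. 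In every case the normalisation costs $O(d^2)$ bookkeeping plus $O(d)$ root-finding calls, so summing with the diagonalisation step gives the bounds claimed in Table~\ref{tab:fix}; all randomness sits in root-finding, whose output is canonical, so the algorithm is Las Vegas with canonical output, and deterministic in the symplectic case.

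The step I expect to be most delicate is the even-characteristic quadratic case: Theorem~\ref{thm:diag} is unavailable, so the reduction must go through the polar form and then reconstitute $Q$, and the binary-block normalisation must both stay canonical and correctly collapse several anisotropic planes into the single one allowed by Witt's theorem, all inside the $O(d^\omega+d^2\log q)$ budget. A close second is checking that the various short moves --- hyperbolic-pair completion, and placing the residual anisotropic part in exactly the basis of Definition~\ref{def:standard_forms} --- really are canonical and not merely correct.
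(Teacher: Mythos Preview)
Your proposal is correct and follows essentially the same route as the paper: diagonalise via Theorem~\ref{thm:diag} (using the symplectic case on the polar form when $q$ is even), then normalise the resulting diagonal or $2\times2$ blocks type by type. A couple of minor differences are worth noting. First, the paper does not push all the way to the forms of Definition~\ref{def:standard_forms}; it stops at simpler fixed targets (for instance $I_d$ in the unitary case and $\diag(1,\dots,1)$ or $\diag(\xi,1,\dots,1)$ for the polar form in the odd-$q$ orthogonal case), which suffices since only a common fixed form is needed to compare two inputs. Second, for odd-$q$ orthogonal forms the paper collapses pairs of nonsquare diagonal entries using the element $\nu$ of Proposition~\ref{prop:canelts}(\ref{thm:canelts:nu}) (via $\left(\begin{smallmatrix}1&\nu\\-\nu&1\end{smallmatrix}\right)$), rather than your $q\bmod 4$ hyperbolic-pair analysis; this avoids the case split and keeps the bookkeeping slightly cleaner, though both yield the same complexity. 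Your worry about the even-$q$ quadratic step is well placed, and your resolution is right: once the polar form is block diagonal, the upper-triangular matrix $M$ of $Q$ automatically becomes block diagonal too (since $M$ agrees with $F$ above the diagonal), which is exactly the observation the paper uses.
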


\begin{table} 
\caption{Complexity for transforming forms}\label{tab:fix}
\begin{tabular}{l||l}
Form type			& $C$ \\\hline\hline
Symplectic			& $d^\omega$ \\
Unitary				& $d^\omega +d^2\log q + d\log^2 p$ \\
Quadratic,	 $q$ odd	& $d^\omega +d\log q$ \\
Quadratic,	 $q$ even	& $d^\omega +d\log q + \log^2 q$
\end{tabular}
\end{table}

\begin{proof}
Note that it is enough to find an isometry or similarity from a given form to some fixed form. For quadratic forms we work at least initially with the polar form. 

If the form is of unitary type, or the polar form of a quadratic form in odd characteristic, then use Theorem~\ref{thm:diag} to diagonalise the matrix of the form to $\diag(a_1, \ldots, a_d)$. In case {\sf S} (resp.\ the form is the polar form of a quadratic form in even characteristic), then transform its matrix to a block diagonal matrix with $2 \times 2$ (and $1 \times 1$) blocks.
 
In the symplectic case, each $2\times2$ block is equal to $\antidiag(a, -a)$ for some $a \in \BBF_q^\times$. This is transformed to $\antidiag(1, -1)$  by $\diag(a^{-1}, 1)$.

In the unitary case, the form is transformed to $I_d$
 by $\diag(\alpha_1, \ldots, \alpha_d)$, where $\alpha_i$ is a canonical solution to $\alpha_i^{q+1} = a_i^{-1}$, using Proposition~\ref{prop:norm}. %

In the orthogonal case for $q$ odd, if $d$ is odd and the discriminant is nonsquare then let $\alpha$ be the first nonsquare entry, and multiply all entries by $\alpha^{-1}$ (we produce a similarity since $\alpha\ne1$).  In all orthogonal cases now transform all the square entries
$a_i$ to $1$ by $\sqrt{a_i}^{-1}$ and the nonsquare entries $a_i$ to the first nonsquare entry, $\mu$, by $\sqrt{\mu/a_i}$.  The entries $\mu$ are then changed in pairs to $\mu(1+\nu^2)$, using the fact that
$\left(\begin{smallmatrix}
1 & \nu \\
-\nu & 1 \end{smallmatrix} \right)\left(\begin{smallmatrix}
1 & \nu \\
-\nu & 1 \end{smallmatrix} \right)^{\text{Tr}} = (1+\nu^2)I_2$, 
where $\nu$ is as in Proposition~\ref{prop:canelts}.
Each entry $\mu(1+\nu^2)$ can now be changed to $1$, since $\mu(1+\nu^2)\in\F^{\times2}$. If there is a single nonsquare entry remaining (so that $d$ is even) then this is moved to the first row and transformed to $\prim$. 

In the orthogonal case for $q$ even, the way that we have  transformed the polar form matrix $F$ also makes the matrix $M$ of the quadratic form block diagonal with blocks of size at most 2 (since $F$ and $M$ are identical above the diagonal). We now work with $M$. 
Since every element of $\F_q$ has a square root, we can convert
every block in $M$ to one of the forms $(1)$,
$\left(\begin{smallmatrix}1&a\\0&1\end{smallmatrix}\right)$, or
$\left(\begin{smallmatrix}0&1\\0&0\end{smallmatrix}\right)$.
Note that a summand
$\left(\begin{smallmatrix}1&a\\0&1\end{smallmatrix}\right)$ must have
$a\ne0$, otherwise it would be degenerate and so $Q$ would also be
degenerate.  This also shows that there is at most one summand $(1)$.

Now consider a subform whose matrix is a pair of $2\times2$ blocks:
$\left(\begin{smallmatrix}1&a\\0&1\end{smallmatrix}\right)\oplus
\left(\begin{smallmatrix}1&b\\0&1\end{smallmatrix}\right)$ with
respect to the basis $u_1,u_2,u_3,u_4$.
Changing to the basis
$u_1+u_3,(u_1+u_4)/b,u_1,bu_2+a(u_3+u_4)$, we get the form with matrix
$\left(\begin{smallmatrix}0&1\\0&0\end{smallmatrix}\right)\oplus
\left(\begin{smallmatrix}1&ab\\0&b(a^2+b)\end{smallmatrix}\right)$.
The second block can now be converted to $\left(\begin{smallmatrix}0&1\\0&0\end{smallmatrix}\right)$ or $\left(\begin{smallmatrix}1&a'\\0&1\end{smallmatrix}\right)$ for some $a'\ne0$ as above.

So we eventually get a direct sum of copies of
$\left(\begin{smallmatrix}0&1\\0&0\end{smallmatrix}\right)$ together
with at most one block of the form $(1)$ or
$\left(\begin{smallmatrix}1&a\\0&1\end{smallmatrix}\right)$.
If the polynomial $X^2+X+a$ has a solution in $\F_q$, then
$\left(\begin{smallmatrix}1&a\\0&1\end{smallmatrix}\right)$ can be
transformed to $\left(\begin{smallmatrix}0&1\\0&0\end{smallmatrix}\right)$,
and otherwise it can be transformed to
$\left(\begin{smallmatrix}1&1\\0&\gamma\end{smallmatrix}\right)$.
So %
we are done.
\end{proof}

Theorem~\ref{thm:fix_form} is just a simplified version of this result.
Note that Theorems~\ref{thm:fix_form} and~\ref{thm:fix_form_detail} apply unchanged to computing similarities rather than isometries.

\subsection{Groups}\label{sec:maps}
 
Suppose  $\beta$ (or $Q$) is a nondegenerate form,  as in the previous subsection.
Then $\Delta:= \mathrm{N}_{\gl_d(q^u)}(\Omega)$ consists of all similarities of the form with itself.
The invariant group $I$ consists of all isometries.
We use notation from \cite{KL90} for classical groups.
For example, if $\beta$ is a symplectic form, then $\Delta=\text{CSp}_d(q,\beta)$;  if $\beta$ is the canonical symplectic form, then we abbreviate this to $\text{CSp}_d(q)$.

Define $\tau:\Delta\to \F_{q^u}$ by $\beta(ux,vx)=\tau(x) \beta(u,v)$ for all $u,v\in V$. 
It is well known (see for example \cite[Lemma~2.1.2]{KL90}) that $\tau$ is a homomorphism with kernel $I$.

\begin{lem} \label{lem:tau}
There is a deterministic algorithm which, on input $g \in \Delta$ and the matrix $F$ of $\beta$, computes $\tau(g)$ in $O(d^2)$ field operations.
\end{lem}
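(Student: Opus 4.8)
The plan is to recover $\tau(g)$ from the defining relation $\beta(ux, vx) = \tau(x)\beta(u,v)$ by evaluating it on suitable basis vectors. In matrix terms, $\beta(u,v) = uFv^{\sigma\Tr}$, so the condition that $g$ is a similarity with multiplier $\tau(g)$ translates to $g F g^{\sigma\Tr} = \tau(g) F$. Since $F$ is the matrix of a nondegenerate form it is invertible, so we may write $\tau(g) = (g F g^{\sigma\Tr} F^{-1})_{ii}$ for any $i$, or indeed read off $\tau(g)$ as the unique scalar $\lambda$ with $g F g^{\sigma\Tr} = \lambda F$; in particular it suffices to compute a single entry of $g F g^{\sigma\Tr}$ that lies in a nonzero position of $F$, together with the corresponding entry of $F$, and take the ratio.

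First I would locate a pair $(i,j)$ with $F_{ij} \neq 0$; since $F$ is nondegenerate such a pair exists and can be found by scanning $F$ in $O(d^2)$ field operations. Next I would compute the single entry $(g F g^{\sigma\Tr})_{ij}$, which is $g_i F (g_j)^{\sigma\Tr}$ where $g_i, g_j$ denote rows $i$ and $j$ of $g$: forming $F (g_j)^{\sigma\Tr}$ is a matrix–vector product costing $O(d^2)$ field operations (plus $O(d\log q)$ for the application of $\sigma$ to the row in the unitary case, which is absorbed into $O(d^2)$ since $\log q \le d$ is not assumed — but at worst this is $O(d^2 + d\log q) = O(d^2\log q)$; in fact $O(d\log q)$ additional operations suffices and is dominated by $O(d^2)$ only if $\log q = O(d)$, so to be safe one notes the cost is $O(d^2)$ field operations once the $\sigma$-images are regarded as precomputed, matching the stated bound for the purpose at hand), and then the inner product with $g_i$ costs a further $O(d)$. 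Finally set $\tau(g) := (g F g^{\sigma\Tr})_{ij} \cdot F_{ij}^{-1}$, using one inversion.

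The only thing to check is correctness of this formula, i.e.\ that the scalar so computed really is $\tau(g)$ and is independent of the choice of $(i,j)$; this is immediate from $gFg^{\sigma\Tr} = \tau(g)F$, which in turn is just the matrix form of $\beta(ux,vx) = \tau(g)\beta(u,v)$ applied with $u = v_i$, $v = v_j$. There is no real obstacle here: the algorithm is a single matrix–vector multiplication followed by an inner product and a division, so the dominant cost is the $O(d^2)$ for the matrix–vector product, giving the claimed bound, and every step is deterministic.
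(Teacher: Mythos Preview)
Your approach is essentially the paper's: locate a position $(i,j)$ with $F_{ij}\ne 0$, compute the single entry $(gFg^{\sigma\Tr})_{ij}=g_iF(g_j)^{\sigma\Tr}$ via one matrix--vector product and one inner product, and divide by $F_{ij}$. The only difference is that the paper fixes the second index to $1$ and scans just the first column of $F$ in $O(d)$ operations---taking $w$ with $wFv_1^{\Tr}\ne 0$ and returning $\beta(wg,v_1g)/\beta(w,v_1)$---a minor streamlining that still lands in the same $O(d^2)$ bound; your parenthetical worry about the $O(d\log q)$ cost of applying $\sigma$ in the unitary case is legitimate, and the paper's stated bound quietly ignores it as well.
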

\begin{proof}
Find $w$ such that $w F v_1^\Tr \neq 0$ in $O(d)$ field operations.
Then $\tau(g)$ is $\beta(wg,v_1g)/\beta(w,v_1)$.
\end{proof}

For quadratic forms, the spinor norm is an epimorphism from the general orthogonal group $I=\orth_d(q, Q)$ to $\F_2^+$.
\begin{df}[Spinor norm] $\phantom{x}$ Let $g \in \gl(d, q)$ preserve the form $Q$.
\begin{enumerate}
\item For $q$ odd, let $U\le V$ be the image of $I_d-g$ and define the
  bilinear form 
  $\chi$ on $U$ by $\chi(u,v)=2\beta(w,v)$ where $w(I_d-g)=u$.
  The \emph{spinor norm} of $g$ is $\spin(g)=\iota(\det(\chi))$.
\item
  For $q$ even, the \emph{spinor norm} of $g$ is $\spin(g)=\rank(I_d+g) \bmod 2$.
\end{enumerate}
\end{df}
\noindent
Our definition for odd $q$ is from \cite{Taylor}, except for the factor of two which we include so the values of the spinor norm agree with \cite[p.29]{KL90}. 
We follow  \cite[Proposition~2.5.7]{KL90} and define $\Omega_d(q, Q):= \so_d(q,Q) \cap \ker(\spin)$.
What we call the spinor norm for even $q$ is called the Dickson invariant by some authors.
\begin{thm}\label{thm:spin}
There is a deterministic algorithm that, on input $g \in \orth_d(q, Q)$, computes  $\spin(g)$. If $q$ is even then the algorithm takes  %
 $O(d^\omega)$ field operations,  otherwise it takes
 $O(d^\omega + \log q)$ field operations. 
\end{thm}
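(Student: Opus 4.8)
The plan is to compute the spinor norm directly from its definition in each characteristic, reducing everything to the linear-algebra primitives already available from Theorem~\ref{mat_complexity}.

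For $q$ even, the spinor norm of $g$ is defined to be $\rank(I_d + g) \bmod 2$. So the algorithm simply forms the matrix $I_d + g$ and computes its rank. By Theorem~\ref{mat_complexity} this takes $O(d^\omega)$ field operations, and reducing the result modulo $2$ is free. There is nothing more to do in this case, and the algorithm is plainly deterministic.

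For $q$ odd, we work directly with the definition via the bilinear form $\chi$. First compute $M = I_d - g$ in $O(d^2)$ field operations. Find a basis for the image $U = VM$, together with a preimage for each basis vector: row-reducing $M$ with Theorem~\ref{mat_complexity} in $O(d^\omega)$ operations yields both the row space $U$ (spanned by, say, vectors $u_1, \ldots, u_r$ where $r = \rank(M)$) and, by tracking the row operations, vectors $w_1, \ldots, w_r$ with $w_i M = u_i$. Then the matrix of $\chi$ on $U$ with respect to $u_1, \ldots, u_r$ is the $r \times r$ matrix with $(i,j)$-entry $2\beta(w_i, u_j) = 2 w_i F u_j^\Tr$; assembling this costs $O(d^2)$ operations (it is one matrix product of a bounded number of $d \times d$-sized factors). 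Finally $\spin(g) = \iota(\det(\chi))$: the determinant costs $O(d^\omega)$ by Theorem~\ref{mat_complexity}, and $\iota$ costs $O(\log q)$ by the discussion following Theorem~\ref{quadratic} in Subsection~\ref{subsect:fields}. Summing gives $O(d^\omega + \log q)$, and every step is deterministic.

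The only point that needs a word of justification, rather than being a routine complexity count, is that this procedure actually computes a well-defined quantity: $\chi$ as defined does not obviously depend only on $u$ rather than on the choice of preimage $w$, and one must also check $\det(\chi)$ is independent of the choice of basis of $U$ up to squares. Both facts are standard properties of the spinor norm and are established in \cite{Taylor}; the form $\chi$ is well-defined because if $w(I_d - g) = 0$ then $w$ is fixed by $g$, and $g$ preserving $Q$ forces $\beta(w, v) = 0$ for all $v \in U$ (as $v = w'(I_d-g)$ gives $\beta(w,v) = \beta(w, w') - \beta(w, w'g) = \beta(w,w') - \beta(wg^{-1}, w') = \beta(w,w') - \beta(w,w') = 0$, using $wg = w$). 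A change of basis of $U$ multiplies the Gram matrix of $\chi$ by $PP^\Tr$ for invertible $P$, scaling the determinant by a square, so $\iota(\det(\chi))$ is unchanged. Hence the output is canonical as required, and no genuine obstacle arises beyond invoking these known facts.
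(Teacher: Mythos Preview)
Your proof is correct and follows essentially the same approach as the paper: for even $q$ compute a rank, and for odd $q$ find a basis of the image of $I_d-g$ together with preimages, form the Gram matrix of $\chi$, take its determinant, and apply $\iota$. One trivial slip: assembling the $r\times r$ Gram matrix $2WFU^{\Tr}$ is a matrix product and costs $O(d^\omega)$, not $O(d^2)$, but this does not affect your final bound of $O(d^\omega+\log q)$.
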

\begin{proof}
If $q$ is even, apply Theorem~\ref{mat_complexity}.
If $q$ is odd,  
compute the nullspace $N$ of $a:= I_d - g$ and find a matrix  $M$ whose rows are a basis to a complement of $N$ in $O(d^\omega)$ field operations. Then the rows of $Ma$ are a basis for the image of $a$. 
Calculate the form $\chi_g$  on $Ma$ as $S = 2MF(Ma)^{\Tr}$ in $O(d^\omega)$ field operations. 
Finally, find $\iota(\det S)$. 
\end{proof}

We finish this section with a discussion of reflections. 
Let  $v \in V$ be nonsingular, so that $Q(v) \neq 0$. 
The \emph{reflection} in $v$ is the map $\refl_v:V\to V,$ 
$u \mapsto u-\beta(u,v)v/Q(v)$.  

\begin{lem}\label{lem:refl} 
Let $Q$ be nondegenerate with polar form $F$, and let $u, v \in V$ be nonsingular. 
\begin{enumerate}
\item\label{lem:refl:det} All reflections are elements of $\orth_d(q,Q)$, and have determinant $-1$ and order $2$.
\item\label{lem:refl:spine} For $q$ even, $\spin(\refl_v)=1$.
\item\label{lem:refl:spino} For $q$ odd,  $\spin(\refl_v)=\iota(\beta(v, v))$. 
\item\label{lem:refl:coset} For $q$ odd,
  $\Omega_d(q,Q)\refl_u = \Omega_d(q,Q)\refl_v$ if and only if
  $\iota(\beta(u, u))=\iota(\beta(v, v)).$
\end{enumerate}
\end{lem}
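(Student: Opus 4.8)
The plan is to verify each part of Lemma~\ref{lem:refl} by direct computation, leaning on the explicit formula $\refl_v\colon u\mapsto u-\beta(u,v)v/Q(v)$ and the definitions of spinor norm given above.

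For part~(\ref{lem:refl:det}), I would first check that $\refl_v$ preserves $Q$: compute $Q(u-\beta(u,v)v/Q(v))$ by expanding using $Q(a+b)=Q(a)+Q(b)+\beta(a,b)$ (valid in all characteristics), and observe the cross terms cancel because $\beta(u,v)^2/Q(v)^2\cdot Q(v) = \beta(u,v)^2/Q(v)$ matches $\beta(u, -\beta(u,v)v/Q(v)) = -\beta(u,v)^2/Q(v)$. Order $2$ follows since $\refl_v$ fixes $v^\perp$ pointwise and sends $v\mapsto -v$ (using $\beta(v,v)=2Q(v)$); more carefully, applying the formula twice to $u$ returns $u$. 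For the determinant, note $\refl_v$ is the identity on the hyperplane $v^\perp$ (which has dimension $d-1$ by nondegeneracy) and acts as $-1$ on a complement spanned by a vector $w$ with $\beta(w,v)\neq 0$; hence $\det = -1$. In even characteristic $-1=1$, but then order $2$ and the reflection being a transvection still hold, and part~(\ref{lem:refl:det}) should perhaps be read with the characteristic~$2$ caveat that $\det(\refl_v)=1=-1$.

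For~(\ref{lem:refl:spine}), with $q$ even the spinor norm is $\rank(I_d+\refl_v)\bmod 2$; since $I_d+\refl_v$ (equivalently $I_d - \refl_v$) has image exactly $\langle v\rangle$, its rank is $1$, so $\spin(\refl_v)=1$. For~(\ref{lem:refl:spino}), with $q$ odd, $U=\mathrm{im}(I_d-\refl_v)=\langle v\rangle$ is one-dimensional, and $w(I_d-\refl_v)=u$ with $u=v$ is solved by any $w$ with $\beta(w,v)/Q(v)=1$; then $\chi(v,v)=2\beta(w,v)=2Q(v)=\beta(v,v)$, so $\det(\chi)=\beta(v,v)$ and $\spin(\refl_v)=\iota(\beta(v,v))$ — the factor of $2$ in the definition is exactly what makes this clean. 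Part~(\ref{lem:refl:coset}) then follows from~(\ref{lem:refl:spino}): since $\refl_u,\refl_v\in\orth_d(q,Q)$ have determinant $-1$, both lie in $\so_d$ is false — rather, $\refl_u\refl_v\in\so_d(q,Q)$, and $\spin(\refl_u\refl_v)=\spin(\refl_u)+\spin(\refl_v) = \iota(\beta(u,u))+\iota(\beta(v,v))$ in $\F_2$, which is $0$ iff the two $\iota$ values agree; since $\Omega_d(q,Q)=\so_d(q,Q)\cap\ker(\spin)$, the cosets $\Omega_d(q,Q)\refl_u$ and $\Omega_d(q,Q)\refl_v$ coincide iff $\refl_u\refl_v\in\Omega_d(q,Q)$ iff $\iota(\beta(u,u))=\iota(\beta(v,v))$.

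I do not anticipate a serious obstacle here — everything reduces to short linear-algebra identities — but the step requiring the most care is~(\ref{lem:refl:spino}): one must correctly identify $U=\langle v\rangle$, choose a valid preimage $w$, and track the factor of $2$ in the definition of $\chi$ so that the computed discriminant matches the convention of~\cite[p.29]{KL90}. A secondary point to state carefully is that $\det(\refl_u\refl_v)=1$, so that the product genuinely lies in $\so_d(q,Q)$ and the spinor-norm criterion for $\Omega$-membership applies.
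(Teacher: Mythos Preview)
Your proposal is correct and follows essentially the same route as the paper, only with more detail: the paper dismisses (\ref{lem:refl:det}) and (\ref{lem:refl:spine}) as well-known exercises, handles (\ref{lem:refl:spino}) by noting that $I_d-\refl_v$ has image $\langle v\rangle$ and sends $v\mapsto 2v$ so the matrix of $\chi$ is $(\beta(v,v))_{1\times 1}$, and deduces (\ref{lem:refl:coset}) from (\ref{lem:refl:spino}) together with $\spin$ being a homomorphism. Your computation for (\ref{lem:refl:spino}) via a preimage $w$ with $\beta(w,v)=Q(v)$ is equivalent to the paper's choice $w=v/2$, and your extra remark that $\det(\refl_u\refl_v)=1$ is a worthwhile point to make explicit.
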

\begin{proof}
Parts (\ref{lem:refl:det}) and (\ref{lem:refl:spine}) are well-known, and are easy exercises. 
For part (\ref{lem:refl:spino}), let $g=\refl_v$.
Then $(I_d-g)$ has image $\langle v \rangle$, and maps $v \mapsto 2v$, so the matrix of $\chi_g$ is $(\beta(v, v))_{1 \times 1}$. 
Part (\ref{lem:refl:coset}) follows from part (\ref{lem:refl:spino}) and the fact that $\spin$ is a homomorphism. 
\end{proof}

\begin{prop} \label{prop:reflections}
Let $Q$ be nondegenerate. 
For odd $q$ and $d \geq 2$, there is a Las Vegas algorithm that constructs canonical reflections $R_0,R_1$ with $\spin(R_i)=i$ in $O(d^2+\log q)$ field operations. 
For even $q$ and $d \geq 2$, a canonical reflection $R_0$ can be constructed deterministically in $O(d^2)$ field operations.
\end{prop}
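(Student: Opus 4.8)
The plan is to build the required reflections directly from the canonical anisotropic (and, for odd $q$, square/nonsquare-normed) vectors supplied by Lemma~\ref{lem:vectors}, using the fact that a reflection is completely determined by the line it reflects in, and that its spinor norm is controlled by $\iota(\beta(v,v))$ (Lemma~\ref{lem:refl}). For even $q$ the situation is immediate: by Lemma~\ref{lem:vectors} we can deterministically find a canonical anisotropic vector $v$ (note $Q$ is nondegenerate, hence nonzero, and $d\ge 2$ so such a vector exists), in $O(d^2)$ field operations; set $R_0:=\refl_v$. By Lemma~\ref{lem:refl}(\ref{lem:refl:det}) this lies in $\orth_d(q,Q)$, and by Lemma~\ref{lem:refl}(\ref{lem:refl:spine}) it has spinor norm $1$ --- so $R_0$ has spinor norm $0$ in the additive group $\F_2^+$ in the sense that we simply relabel; more precisely $\spin(\refl_v)$ takes the single available nontrivial value and we record $R_0$ as the canonical reflection. (If the paper's convention is that the unique reflection for even $q$ is called $R_0$, then this is exactly what is claimed.) Computing the matrix of $\refl_v$ from $v$ and $F$ is a rank-one update, costing $O(d^2)$ field operations, so the even case is complete within the stated bound.

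For odd $q$, I would invoke the third (Las Vegas) clause of Lemma~\ref{lem:vectors}: it produces, in $O(d^2+\log q)$ field operations, canonical nonsingular vectors $u_1,u_2$ with $\iota(Q(u_1))=0$ and $\iota(Q(u_2))=1$. Since $q$ is odd and $Q$ is nondegenerate, $\beta(u_i,u_i)=2Q(u_i)$, and $2$ is a square precisely when $q\equiv\pm1\pmod 8$; in general $\iota(\beta(u_i,u_i))=\iota(2Q(u_i))=\iota(2)+\iota(Q(u_i))\bmod 2$. This is a slight nuisance but is harmless: if $\iota(2)=0$ set $R_0:=\refl_{u_1}$ and $R_1:=\refl_{u_2}$, while if $\iota(2)=1$ swap them, setting $R_0:=\refl_{u_2}$ and $R_1:=\refl_{u_1}$. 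In either case Lemma~\ref{lem:refl}(\ref{lem:refl:spino}) gives $\spin(R_i)=\iota(\beta(\cdot,\cdot))=i$ as required, and determining $\iota(2)$ costs $O(\log q)$ field operations. Forming the two reflection matrices from $u_1,u_2$ and $F$ again costs $O(d^2)$ field operations each, so the whole construction runs in $O(d^2+\log q)$ field operations, and the output is canonical because the $u_i$ are canonical and the labelling rule is deterministic.

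The main obstacle --- such as it is --- is bookkeeping rather than mathematics: one must be careful about the two places where a parity can flip, namely the factor $2$ relating $Q$ and $\beta$, and (for even $q$) the clash between the multiplicative labelling $\{$square, nonsquare$\}$ used in Lemma~\ref{lem:vectors} and the additive labelling $\F_2^+$ used for the spinor norm. Neither affects the complexity, which is dominated by the call to Lemma~\ref{lem:vectors}; the Las~Vegas nature of the odd-$q$ algorithm is inherited entirely from that lemma (which in turn inherits it from Theorem~\ref{quadratic}, used to extract a square root of $Q(u_1)/Q(u_2)$), while for even $q$ no randomisation enters and the algorithm is deterministic, exactly as claimed.
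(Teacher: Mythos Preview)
Your proposal is correct and follows the same approach as the paper: invoke Lemma~\ref{lem:vectors} to obtain canonical nonsingular vectors, then write down the corresponding reflections as rank-one updates of the identity in $O(d^2)$ field operations. You are in fact more careful than the paper's own proof on one point: the paper silently takes $R_i=\refl_{u_i}$ without addressing the $\iota(2)$ offset between $\iota(Q(v))$ and $\iota(\beta(v,v))=\spin(\refl_v)$, whereas your swap rule handles this correctly. One minor clarification: for even $q$ the proposition only asks for \emph{a} canonical reflection named $R_0$, with no claim about its spinor norm (indeed every reflection has $\spin=1$ here by Lemma~\ref{lem:refl}(\ref{lem:refl:spine})), so your worry about relabelling is unnecessary.
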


\begin{proof}
For $q$ odd, by Lemma~\ref{lem:vectors} we can find canonical vectors $u_0,u_1$ with $\iota(Q(u_i))=i$.
Note that $u_iFv_j^{\Tr}$ can be computed in $O(d)$ 
field operations for each $j$, 
as $Fv_j$ is the $j$th row of $F$. Then row $j$ of $\refl_{u_i}$ is 
$v_j - (u_iFv_j^{\Tr})Q(u_i)^{-1}u_i$. The case $q$ even %
 is similar. 
\end{proof}

\section{Constructive homomorphisms}\label{sec:pres}
In this section, for each type of classical group, we construct the quotient of 
the conformal group $\Delta$ by the quasisimple group $\Omega$ as a presentation in two ways. 
The first presentation has $O(q)$ generators, and a word for the image of an element of $\Delta$ can be found in polynomial time.
The second presentation is polycyclic with at most four generators and at most six relations, but words for images can only be found using discrete logarithms. 
To our knowledge, for the orthogonal groups such 
presentations only exist in the literature for the projective groups \cite[Sections~2.5--2.8]{KL90}. 
Note that the first presentation has a constant number of generators and relations when considered as an FC-presentation in the sense of \cite{CohenHallerMurray08}.
We also compute canonical representatives for cosets of $\Omega$, which are needed for the conjugacy problem in Section~\ref{sect:app}.
Throughout this section we assume that $\Omega$ is quasisimple, which eliminates some small dimensional exceptional cases.

Our  main result in this section is the following theorem.

\begin{thm}\label{thm:quotdetail}
Let $\Omega \leq \gl_d(q^u)$ be a quasisimple classical group fixing a known classical form, let  $\Delta = \mathrm{N}_{\gl_d(q^u)}(\Omega)$
and let $G:= \Delta/\Omega$.
Let $X$ be the matrix tranforming the canonical form to the given form (Theorem \ref{thm:fix_form}).
Let $X_i$, $\sR_i$, and $C_i$ ($i=1,2$) be defined as in Table~\ref{tab:orth}.
\begin{enumerate}
\setcounter{enumi}{-1}
\item\label{thm:quotdetail:gen} $\Delta$ is generated by $\Omega$ and $X_0$.
\item\label{thm:quotdetail:pres} $P_1 = \langle X_1 \mid  \sR_1\rangle$ is a presentation for $G$.
The image of $g \in \Delta$ as a canonical word in $P_1$ can be computed in $O(C_1)$ field operations. 
\item\label{thm:quotdetail:pc} $P_2= \langle X_2 \mid  \sR_2\rangle$ is a polycyclic presentation for $G$.
The image of $g \in \Delta$ as a canonical word in $P_2$ can be computed in $O(C_1)$ field operations plus $C_2$ discrete logarithms.
\item\label{thm:quotdetail:coset}  A canonical representative of the coset $\Omega g$, where $g \in \Delta$,  can be computed in $O(C_3)$ field operations.
\end{enumerate}
For unitary and orthogonal groups, these algorithms are Las Vegas; in the other cases they are deterministic.
\end{thm}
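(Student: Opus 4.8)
The plan is to work type by type, and for each type of classical group to make a completely explicit choice of the table data $X_i$, $\sR_i$, $C_i$, then verify the four claims. The backbone is the identification of $G = \Delta/\Omega$ as an abstract group; this is classical (see \cite[Section~2.1]{KL90}), and in each case $G$ is a small abelian group — a quotient of a product of at most three cyclic groups, each of order dividing $q-1$, $q+1$, $2$, or $\gcd$-type quantities. For part~(\ref{thm:quotdetail:gen}), the point is that the factor group $\Delta/I$ is cyclic generated by the similarity multiplier (via $\tau$ of Lemma~\ref{lem:tau}) and $I/\Omega$ is generated by a bounded number of explicit elements (diagonal similarities, reflections, the field-twist element in the unitary case), each of which is easy to write down once the form has been put in canonical shape by $X$ from Theorem~\ref{thm:fix_form}; conjugating the canonical generators by $X^{-1}$ gives the generators of $\Delta$ relative to the given form. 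So $\Delta = \langle \Omega, X_0\rangle$ where $X_0$ is (the $X^{-1}$-conjugate of) a single diagonal similarity of maximal multiplier order that also realises the outer part — for symplectic and unitary this genuinely is one element; for the orthogonal cases one takes a diagonal similarity whose multiplier and spinor-norm data together generate, which is again a single element since $G$ is cyclic there too (for $\spl$, $\gu$) or a direct product we can still hit with one similarity of the right multiplier plus, if needed, absorb reflections — I will record the precise element in Table~\ref{tab:orth}.

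For part~(\ref{thm:quotdetail:pres}): $P_1 = \langle X_1 \mid \sR_1\rangle$ takes $X_1$ to be the canonical generators indexed so that one of them, $Z$, has order $q-1$ (or $q+1$ in the unitary case) — this is where the $O(q)$ generator count implicit in Section~\ref{sec:pres}'s discussion comes from if one insists on an FC-presentation, but here we simply allow a generator of large order, giving finitely many relations. The relations $\sR_1$ encode the orders of the generators, commutativity, and the one or two identifications that cut the free abelian group down to $G$ (e.g.\ the relation expressing that a reflection squares into $\Omega$, or that a norm-one scalar lies in $\Omega$). To compute the image of $g\in\Delta$: first apply $X$ to move to the canonical form; compute $\tau(g)$ by Lemma~\ref{lem:tau} in $O(d^2)$; this pins down the $Z$-exponent up to the need for a discrete log — but for $P_1$ we avoid the log by instead reducing $g$ stepwise, multiplying by known generators to kill the multiplier (this is where $O(d^\omega)$ matrix multiplications enter, and why $C_1 = d^\omega + d^2\log^2 q$ or so), then in the orthogonal case compute $\spin$ of the residual isometry by Theorem~\ref{thm:spin} and peel off reflections using Lemma~\ref{lem:refl}, and in the unitary case detect the field-twist component. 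The residue lies in $\Omega$, so the accumulated word is the image; canonicity follows because every sub-algorithm invoked (Theorem~\ref{thm:fix_form}, Lemma~\ref{lem:tau}, Theorem~\ref{thm:spin}, Proposition~\ref{prop:reflections}) is canonical.

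For part~(\ref{thm:quotdetail:pc}): $P_2 = \langle X_2 \mid \sR_2\rangle$ is just $P_1$ rewritten as a power-conjugate presentation on the at most three generators of the abelian group $G$, with the relators expressing powers in terms of lower generators; since $G$ is abelian of rank $\le 3$ this is routine and has $\le 6$ relations. The only new ingredient in computing images is that now we do use discrete logarithms: having computed $\tau(g)$ (an element of $\F_q^\times$ or $\F_{q^2}^\times$), one call to the oracle gives its exponent relative to the canonical primitive element, hence the $Z$-exponent directly; in the unitary case a second call may be needed to split the norm-one part — giving $C_2 \le 2$. For part~(\ref{thm:quotdetail:coset}): a canonical coset representative is obtained by taking the image word from part~(\ref{thm:quotdetail:pres}) in normal form and mapping it back to the canonical-form preimage (a product of the fixed canonical generators), then conjugating by $X^{-1}$; this costs $O(d^\omega)$ matrix multiplications on top of the image computation, so $C_3$ has the same shape as $C_1$. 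The Las~Vegas qualifier in the unitary and orthogonal cases comes exactly from the randomised square-root/norm-equation solvers (Theorem~\ref{quadratic}, Proposition~\ref{prop:norm}) and the randomised vector searches (Lemma~\ref{lem:vectors}, Proposition~\ref{prop:reflections}); symplectic needs none of these.

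The main obstacle I expect is bookkeeping in the orthogonal cases: there the group $G$ depends delicately on the parity of $q$, the parity of $d$, the form type $\pm$/$\circ$, and the discriminant, so producing a single uniform table entry $(X_i,\sR_i,C_i)$ that is simultaneously correct for all of them — and checking that the chosen $X_0$ really generates $\Delta$ over $\Omega$ in each subcase, given the subtle behaviour of the spinor norm and of $\so$ versus $\orth$ — will require a careful case split. The field and even-characteristic subtleties (spinor norm as Dickson invariant, quadratic versus polar form) compound this. The symplectic and unitary cases, by contrast, should be short.
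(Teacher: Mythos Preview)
Your overall architecture (type-by-type, compute $\tau$, then peel off to $I$ and read $\det$/$\spin$, and note that orthogonal is the delicate case) matches the paper. But two points are genuine errors, not just bookkeeping.

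First, $G=\Delta/\Omega$ is \emph{not} always abelian. For $\Omega_d^\pm(q)$ with $q$ odd and $d$ even, a similitude with nonsquare multiplier conjugates a reflection of square spinor norm to one of nonsquare spinor norm (since $Q(vg)=\tau(g)Q(v)$), so in the quotient $c(\xi)$ swaps $r_0$ and $r_1$. The paper's $\sR_1$ records exactly this: $r_i^{c(\lambda)}=r_{i+\iota(\lambda)}$. Your statement ``in each case $G$ is a small abelian group'' and the ensuing claim that a single well-chosen similitude can serve as $X_0$ both fail here; the paper's $X_0$ genuinely contains the reflections $R_0,R_1$ alongside the $C(\lambda)$ (and $C_0^-$ in minus type), and the presentations are non-abelian.

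Second, and more structurally, you have misidentified the mechanism by which $P_1$ avoids discrete logarithms. You take $X_1$ to contain a single generator $Z$ of order $q-1$ and propose to ``avoid the log by reducing $g$ stepwise''; but if $P_1$ has boundedly many generators, then writing the $Z$-exponent of the image of $g$ \emph{is} computing $\log_\xi\tau(g)$, and no amount of matrix multiplication sidesteps that. The paper's $P_1$ instead has $O(q)$ generators: one symbol $a(\lambda)$ (or $c(\lambda)$, etc.) for \emph{each} $\lambda\in\F_{q^u}^\times$, with multiplicative relations $a(\lambda)a(\mu)=a(\lambda\mu)$. The image of $g$ is then literally $a(\tau(g))\,b(\mu^{-d}\det g)$ (unitary) or $r_0^{b'}r_1^b c(\lambda)$ (orthogonal): you compute the field elements $\tau(g)$, $\det g$, $\spin$ and use them as \emph{labels}, never as exponents. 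That is the whole point of carrying two presentations, and it is what the paper means when it says $P_1$ ``has $O(q)$ generators'' while $P_2$ is polycyclic on $\le 3$ generators. Your $P_1$ and $P_2$ are essentially the same object, so your part~(\ref{thm:quotdetail:pres}) does not deliver the claimed log-free complexity.
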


\begin{sidewaystable} \caption{Presentations and complexity for classical groups}\label{tab:orth}
{\small

\begin{tabular}{l||l||l|l|l||l|l|l||l}
	&	& \multicolumn{3}{c||}{Presentation  $P_1$} 
					& \multicolumn{3}{c||}{Presentation $P_2$}\\
$\Omega$&$X_0$ $(1)$	& $X_1$ $(2)$ &$\sR_1$& $C_1$	& $X_2$	&$\sR_2$& $C_2$	& $C_3$ \\ \hline\hline
$\sl_d(q)$ 
	& $A(\la) = \diag(\la,1,\dots,1)$
		& $a(\la)$
			& $(3)$	&$d^\omega$
					& $a:=a(\xi)$ 
						&(4), $a^{q-1}$ 
							& 1&$d^\omega$\\\hline
$\spl_d(q)$&	&	&	&	&	&	&	&\\
$q$ even &$A(\la) = \la^{q/2}I_d$  
		& $a(\la)$
			&$(3)$	&$d^2+\log q$& $a:=a(\xi)$
						&(4), $a^{q-1}$
							& 1 &$d^2 +\log q$\\
$q$ odd
	&$A(\la) =  (\la I_m\oplus I_m)^X $  
		& $a(\la)$
			&$(3)$	&$d^\omega$& $a:=a(\xi)$
						&(4), $a^{q-1}$
							& 1 &$d^\omega$\\\hline
$\su_d(q)$
	& $A(\la) = \la I_d$
		& $a(\la),$
			&$(3)$, $b(\lambda)^{q+1}$, 
				&$d^\omega + \log q$
					&$a:=a(\zeta),$
						&(4), $[a, b]$, 
							&2&$d^\omega + d^2\log q$\\
	& $B(\la)=\left((\lambda^q)\ds I_{d-2}\ds(\lambda^{-1})\right)^X$
		&$b(\la)$&$a(\lambda)^{q-1}=b(\lambda)^{d}$
				& $\qquad+ \log^2 p$
					&$b:=b(\zeta)$
						&$b^{q+1}$,
							&& $\qquad + d\log^2 p$ \\
        &    &   & $[a(\la),b(\mu)]$&  & &$a^{q-1}= b^{d}$  & &\\  \hline
$\Omega_d(q)$,
	& $R_0, C(\la) = \la^{q/2} I_d$
		&$r_0,c(\la)$
			& $(3)$, $[r_0, c(\lambda)]$
				&$d^\omega+\log^2 q$
					&$r_0,c:=c(\xi)$
						&(4), $[r_0,c]$,
							&1&$d^\omega+\log^2 q$\\
$q$ even        &     &       &     &     &     &   $c^{q-1}$   &   & \\\hline
$\Omega_d^\circ(q)$, 
	&$R_0, R_1,$ 
		&$r_0,r_1,$
			& $(3)$, $[r_i, c(\la)]$, 
				&$d^\omega+\log q$	
					&$r_0,r_1,$
						&(4), $[r_0,c]$, 
							&1&$d^\omega +d\log q$\\
$d$ odd, 
	&$C(\la) = \left(\la^2 I_m \ds (\la) \ds I_m\right)^X$	
		&$c(\la)$& $c(-1) = r_0$
				&	&$c:=c(\xi)$	
						&$[r_1, c]$,&\\
$q$ odd & &      &    &      &      &  $c^{(q-1)/2}$ & \\\hline
$\Omega_d^+(q)$, 
	&$R_0,R_1,$
		&$r_0,r_1,$
			& $(3)$, 
				&$d^\omega+d\log q$
					&$r_0,r_1,$
						&(4), $r_0^c=r_1$,
							&1&$d^\omega +d\log q$\\
$d$ even, 
	&$C(\la) = \left(\la I_m \ds I_m\right)^X$
		&$c(\la)$& $r_i^{c(\la)} = r_{i + \iota(\la)}$
				&	&$c:=c(\xi)$
						&$r_1^c = r_0$,&\\
$q$ odd        &       &        &      &       &      &$c^{q-1}= r_0$&&\\\hline
$\Omega_d^-(q)$, 
	&$R_0,R_1,$
		&$r_0,r_1,$
			& $(3)$, $[r_i, c(\la)]$, 
				&$d^\omega+d\log q$	
					&$r_0,r_1,$
						&(4), $r_0^c=r_1$,
							&1&$d^\omega +d\log q$\\
$d$ even, 
	&$C(\la) = \left(\la^2 I_{m} \ds \la I_2 \ds I_m\right)^X$
		&$c_0, c(\la)$	&$c(-1) = r_0 r_1$
				&	&$c:=c(\sqrt{\xi\gamma^{-1}})c_0$
						&$r_1^c=r_0$,&\\
$q$ odd	&$C^-_0  = \left(\gamma I_{m} \ds
	 \left(\begin{smallmatrix}0&1\\\gamma&0\end{smallmatrix}\right)\ds I_m\right)^X$
		&	& $[c_0, c(\la)], c_0^2 = c(\gamma), $
				&	& 	&$c^{q-1}=r_0$ &\\
	&	&	& $r_i^{c_0} = r_{i+1}$
				&	&	&&
\end{tabular} \\
\vspace{5mm}

$(1)$ The generators $R_0,R_1\in X_0$ are defined as in Proposition~\ref{prop:reflections}.
For the group $\Omega_d^-(q)$, we define $\gamma$ as in Proposition~\ref{prop:canelts}.\\
$(2)$ We define $a(\la) \in X_2$ to be the coset $\Omega A(\la)$, and similarly for $b(\la)$, $r_0$, $r_1$, $c(\la)$, $c_0$, for $\la,\mu\in\F_{q^u}^\times$ and $i\in\F_2^+$. \\
$(3)$ The following relations are in $\sR_1$ whenever the relevant generators are defined:\\
$\quad a(\la)a(\mu)=a(\la\mu)$, $b(\la)b(\mu)=b(\la\mu)$, $c(\la)c(\mu)=c(\la\mu)$,
$r_0^2=r_1^2=(r_0r_1)^2=1$.\\
$(4)$ The following relations are in $\sR_2$ whenever the relevant generators are defined:
$r_0^2=r_1^2=(r_0r_1)^2=1$.\\

}
\end{sidewaystable}

Note that Theorem~\ref{thm:quot} is just a simplified version of this result.
The proof is straightforward in the linear and symplectic cases, and is similar to the unitary case. 
\begin{proof}[Proof of Theorem~\ref{thm:quotdetail}, unitary case]  
Proof of (\ref{thm:quotdetail:gen}): By \cite[Table~2.1.C]{KL90}, $[\Delta : \Omega] = q^2-1$.
The matrix $A(\lambda) \in \Delta$ for all $\lambda \in \BBF_{q^2}^\times$, as $A(\lambda)$ preserves the canonical unitary form up to scalars. The matrix $B(\lambda) \in \gu_d(q)$ for all $\lambda \in \BBF_{q^2}^\times$, as it preserves the canonical unitary form. The determinant of $B(\zeta)$ has order $q+1$, so $B:= \langle B(\lambda), \Omega \rangle/\Omega$ is cyclic of order $q+1$. The $\tau$ map shows that $\langle A(\lambda), B \rangle/B$ is cyclic of order $q-1$, so the result follows. 

Proof of (\ref{thm:quotdetail:pres}): First we check the presentation $P_1$. 
Since $A(\la)A(\mu)=A(\la\mu)$, we see that $a(\la)a(\mu)=a(\la\mu)$, and similarly 
$b(\la)b(\mu)=b(\la\mu)$. It follows from the proof of (0) that $b(\lambda)^{q+1} = 1$, and that some power of $a(\la)$ is a power of $b(\la)$. To show that $a(\la)^{q-1}=b(\la)^d$, note that $A(\la)^{q-1}B(\la)^{-d}$ has determinant $1$.

We map $g\in\Delta$ to $a(\tau(g))b(\mu^{-d}\det(g))\in P_1$, where $\mu$ is the canonical solution of $\mu^{q+1}=\tau(g)$. This is the correct image  since it factors through $\det$ and 
$\tau$ correctly.  Since $\tau(g)$ can be computed by a deterministic algorithm in $O(d^2)$ field operations by Lemma~\ref{lem:tau}, and $\mu$ can be computed by a Las Vegas algorithm in $O(\log q+ \log^2 p)$ field operations by Proposition~\ref{prop:norm}, the result follows. 

Proof of (\ref{thm:quotdetail:pc}): It is clear that $P_2$ presents the same group as $P_1$. To write $g \in \Delta$ as a word in $a$ and $b$, find the discrete logarithms of $\tau(g)$ and $\mu^{-d} \det(g)$. 

Proof of (\ref{thm:quotdetail:coset}): Use Theorem~\ref{thm:fix_form} to find $X$ such that 
$\su_d(q,\beta)=\su_d(q)^X$.
Take the coset representative of $g\in\Delta$ to be 
$(A(\tau(g))B(\mu^{-d}\det(g)))^X$.
\end{proof}

In the remainder of this section, we consider the orthogonal case.  
Since $\Omega$ is quasisimple by assumption,  $d \geq 3$.  
If $q$ is even, we also assume $d$ is even, since in even characteristic the odd degree orthogonal groups are isomorphic to symplectic groups.
For $\epsilon \in \{+, -, \circ\}$ we write $G=G^\epsilon(q):= \co^\epsilon_d(q)/\Omega^\epsilon_d(q)$.

Our first result proves Theorem~\ref{thm:quotdetail}(\ref{thm:quotdetail:gen}), and part of Theorem~\ref{thm:quotdetail}(\ref{thm:quotdetail:pres}) for the orthogonal case. 

\begin{prop} \label{prop:cosets} 
The group $\co^\epsilon_d(q)$ is generated by $\Omega^\epsilon_d(q)$ together with the generators $X_0$ in Table~\ref{tab:orth}. 
Furthermore,  $P_1=\langle X_1|\mathcal{R}_1 \rangle$ is a presentation for $G^\epsilon_d(q)$.
\end{prop}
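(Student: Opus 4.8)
\textbf{Proof proposal for Proposition~\ref{prop:cosets}.}

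The plan is to treat the generation statement and the presentation statement in turn, working case by case on $\epsilon \in \{+,-,\circ\}$, and within each case splitting according to the parity of $q$. For the generation statement the key input is the index $[\co^\epsilon_d(q) : \Omega^\epsilon_d(q)]$, which is read off from \cite[Table~2.1.C]{KL90} (and depends on $\epsilon$, the parity of $q$, and the parity of $d$). I would first verify that each listed matrix in $X_0$ genuinely lies in $\co^\epsilon_d(q)$: the reflections $R_0, R_1$ lie in $\orth^\epsilon_d(q)$ by Lemma~\ref{lem:refl}(\ref{lem:refl:det}), the diagonal scalar-type matrices $C(\lambda)$ are checked to multiply the canonical form by a scalar (so lie in $\co^\epsilon_d(q)$ via $\tau$), and $C^-_0$ is checked directly against the canonical $-$-type form. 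Then I would compute the images of these elements under the two homomorphisms that control the structure of $G^\epsilon$: the scalar-multiplier map $\tau \colon \co^\epsilon_d(q) \to \F_q^\times$ with kernel $\orth^\epsilon_d(q)$, and (inside $\orth^\epsilon_d(q)$) the pair given by $\det$ and $\spin$, using Lemma~\ref{lem:refl}(\ref{lem:refl:spine})--(\ref{lem:refl:spino}) to evaluate $\spin(R_i) = i$ and $\det(R_i) = -1$. Showing that $\langle \Omega^\epsilon_d(q), X_0\rangle$ surjects onto $\co^\epsilon_d(q)/\Omega^\epsilon_d(q)$ then reduces to: the images of $C(\lambda)$ under $\tau$ generate $\F_q^\times$ (so hit the whole $\co/\orth$ quotient, of order $q-1$ or $(q-1)/2$); and the images of $R_0, R_1, C^-_0$ generate $\orth^\epsilon_d(q)/\Omega^\epsilon_d(q)$ (a group of order $1$, $2$ or $4$ depending on the case), which is immediate once we know the values of $(\det, \spin)$ on these elements and that they are distinct where needed. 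A counting argument against the index from \cite[Table~2.1.C]{KL90} closes the generation claim.

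For the presentation claim $P_1 = \langle X_1 \mid \mathcal{R}_1\rangle$, the strategy is the standard one for verifying a presentation: exhibit a surjection $\pi \colon P_1 \twoheadrightarrow G^\epsilon_d(q)$ by sending each abstract generator to the corresponding coset (which is well-defined because one checks the relations $\mathcal{R}_1$ hold among the cosets in $G^\epsilon_d(q)$), then show $|P_1| \le |G^\epsilon_d(q)|$ so that $\pi$ is an isomorphism. Checking the relations hold in $G$ is routine: the relations in note~$(3)$ of Table~\ref{tab:orth} assert that $c(\lambda)$ is multiplicative (clear, since $C(\lambda)C(\mu)$ and $C(\lambda\mu)$ differ by an element of $\Omega$, using that their ratio has trivial $\tau$, $\det$ and $\spin$ — the last computed via the $\chi$-form) and that $r_0, r_1$ generate a Klein four-group modulo $\Omega$ (from the $(\det,\spin)$ values), while the remaining relations ($[r_i, c(\lambda)]$, $c(-1)=r_0$ or $r_0r_1$, $r_i^{c(\lambda)} = r_{i+\iota(\lambda)}$, and the $c_0$-relations in the $-$ case) are verified by direct computation of $(\det, \spin, \tau)$ on the relevant products, again using Lemma~\ref{lem:refl} and the definition of $\spin$ on reflections. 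For the order bound I would use the relations to put every word in $P_1$ into a normal form $c(\lambda)^{a_0} r_0^{b_0} r_1^{b_1}$ (or the analogous form with $c_0$ in the $-$ case), giving $|P_1| \le (q-1)\cdot|K|$ where $|K| \in \{1,2,4\}$ is the order of the reflection part; comparing with $|G^\epsilon_d(q)| = [\co^\epsilon_d(q):\Omega^\epsilon_d(q)]$ from part~(0) yields equality.

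The main obstacle is the bookkeeping in the $\Omega^-_d(q)$, $q$ odd case, where $X_0$ contains the extra generator $C^-_0$ (with $(C^-_0)^2 = C(\gamma)$ and $r_i^{c_0} = r_{i+1}$): here the quotient $\co^-_d(q)/\Omega^-_d(q)$ is larger and has a less transparent structure, so one must be careful that the normal form really collapses the group to the claimed order and that $c_0$ does not generate anything extra beyond what $c(\lambda)$ and the $r_i$ already give. Verifying $\spin$ of the various products in this case — in particular that $c(-1) = r_0 r_1$ and that conjugation by $c_0$ swaps $r_0$ and $r_1$ — requires computing the $\chi$-form of several $2\times 2$ or small blocks and tracking $\iota$ of determinants against the specific constants $\gamma$ and $\nu$ from Proposition~\ref{prop:canelts}. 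Everything else is a finite, if tedious, check of $(\det,\spin,\tau)$ on explicit matrices.
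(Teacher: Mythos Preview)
Your proposal is correct and follows the same overall strategy as the paper: use the filtration $\Omega \le \orth \le \co$ controlled by $(\tau,\det,\spin)$, verify the relations hold in $G$, and finish by an order count against \cite[Table~2.1.C]{KL90}.

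The paper streamlines the relation-checking step in two ways that are worth noting. First, the matrix identities $C^\epsilon(\lambda)C^\epsilon(\mu)=C^\epsilon(\lambda\mu)$, $(C_0^-)^2=C^-(\gamma)$, $C^\circ(-1)=\refl_x$, and $C^-(-1)=\refl_x\refl_y$ hold \emph{exactly} in $\gl_d(q)$, not merely modulo $\Omega$; there is no need to compute $\spin$ of any ratio via the $\chi$-form. Second, for the conjugation relations the paper uses the identity $\refl_v^{\,g}=\refl_{vg}$ together with the observation that $\iota(Q(vg))=\iota(Q(v))+\iota(\tau(g))$ for $q$ odd, and then appeals to Lemma~\ref{lem:refl}(\ref{lem:refl:coset}). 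This reduces $r_i^{c(\lambda)}=r_{i+\iota(\lambda)}$ and $r_i^{c_0}=r_{i+1}$ to a one-line check on $\iota(\tau(g))$, entirely avoiding the direct spinor-norm computations on products that you anticipate as ``the main obstacle'' in the $\Omega^-_d(q)$ case.
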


\begin{proof}
It is easy to check that $C^{\epsilon}(\lambda) \in \co_d^\epsilon(q)$
and $C^-_0 \in \co_d^-(q)$. Note that
$\tau(C^{\epsilon}(\la))=\la^2$ when $q$ is odd and $\epsilon$ is $\circ$ or $-$;
whilst $\tau(C^{\epsilon}(\la))=\la$ in all other cases. 
One may check that $\tau(C_0^-)=\gamma$.

The kernel of $\tau$ on $\co^{\epsilon}_d(q)$ is $\orth^\epsilon_d(q)$, and 
its image is $\F_q^\times$ if $d$ is even,  and $\F_q^{\times2}$ otherwise \cite[\S2.1]{KL90}. 
For $d$ odd,  $\tau(C^\circ(\xi))=\xi^2$ generates $\F_q^{\times2}$.
If $\epsilon$ is $+$ or $q$ is even,
then $\tau(C^\epsilon(\xi))=\xi$ generates $\F_q^\times$.
Finally, if $\epsilon$ is $-$ and $q$ is odd, then
$\tau(C^-(\xi))=\xi^2$ and $\tau(C_0^-)=\gamma$ generate $\F_q^\times$, since 
$\gamma$ is nonsquare.
Since $\orth^\epsilon_d(q)$ is generated by $\Omega^\epsilon_d(q)$ and the reflections,  $\co^\epsilon_d(q)$ is generated by the given elements.

For $q$ even or $d$ odd, $G^\epsilon(q) = 
\langle r_0\rangle \times \langle c(\xi)\rangle  \cong \F_2^+\times \F_q^\times$.
For $q$ odd,  
$G^+(q)$ is an extension of $\langle r_0,r_1\rangle\cong (\F_2^+)^2$ by $\langle c(\xi)\rangle \cong \F_q^\times$, whilst 
$G^-(q)$ is an extension of  $\langle r_0,r_1\rangle\cong (\F_2^+)^2$ by $\langle c(\xi), c_1\rangle \cong \F_q^\times$.
Hence $G^\epsilon(q)$ has the same order as $\co^{\epsilon}_d(q)/\Omega^{\epsilon}_d(q)$  \cite[\S~2.1]{KL90}.
It therefore suffices to show that the relations hold. 

All relations involving only $r_0$ and $r_1$ hold because the quotient $\orth^{\epsilon}_d(q)/\Omega^{\epsilon}_d(q)$ is an elementary abelian $2$-group. 
For the relations involving $r_0$ or $r_1$ conjugated by $c(\la)$ or
$c_0$, note that $\refl_v^g=\refl_{vg}$ for $v\in V$ and $g\in\co_d^\epsilon(q)$.
For $q$ even, all reflections are in the same coset of $\Omega_d^\pm(q)$,
and so $r_0^{c(\la)}=r_0$.
For $q$ odd, $\iota(Q(vg))=\iota(Q(v))+\iota(\tau(g))$. 
For the relations involving products and powers of $c(\la)$ and $c_0$, one checks that $C^\epsilon(\la)C^\epsilon(\mu) =C^\epsilon(\la\mu)$ and so $c(\la)c(\mu) =c(\la\mu)$.
Now,  $C_{2m+1}^\circ(-1)=I_m\ds(-1)\ds I_m = \refl_x$, and since  $Q^\circ(x) = 1$ we deduce that $c(-1)=r_0$.
Finally, 
$C^-(\la)$ commutes with 
$C^-_0$; $(C_0^-)^2=C^-(\gamma)$; and
$C^-(-1)=I_m\ds -I_2\ds I_m = \refl_{x}\refl_{y}$, so $c(-1)=r_0r_1$.
\end{proof}

By setting $c=c(\xi)$, or $c=c(\sqrt{\xi\gamma^{-1}})c_0$ for $q$ odd and  $\epsilon=-$, 
we get presentations for the same groups with a bounded number of generators and relations. %
\begin{cor} \label{cor:cosets}
$P_2=\langle X_2|\mathcal{R}_2 \rangle$ is a presentation for $G^\epsilon_d(q)$.%
\end{cor}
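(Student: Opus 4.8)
The plan is to show that $P_2 = \langle X_2 \mid \mathcal{R}_2\rangle$ presents $G^\epsilon_d(q)$ by exhibiting a surjection $P_2 \twoheadrightarrow G^\epsilon_d(q)$ and checking orders match. First I would note that Proposition~\ref{prop:cosets} already establishes that $P_1 = \langle X_1 \mid \mathcal{R}_1\rangle$ is a presentation for $G^\epsilon_d(q)$, so it suffices to relate $P_2$ to $P_1$. The key observation is that $P_2$ is obtained from $P_1$ by the substitution $c := c(\xi)$ (or $c := c(\sqrt{\xi\gamma^{-1}})c_0$ in the case $q$ odd, $\epsilon = -$), eliminating the one-parameter family $\{c(\lambda) : \lambda \in \F_q^\times\}$ in favour of the single generator $c$.

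The main step is to verify that this is a Tietze transformation. In $P_1$ the relation $c(\lambda)c(\mu) = c(\lambda\mu)$ forces $c(\xi)$ to have order equal to the order of $\xi$ in $\F_q^\times$, namely $q-1$, and shows that every $c(\lambda)$ is a power of $c(\xi)$ (since $\xi$ is primitive); hence $c(\xi)$ together with $r_0, r_1$ generates $G^\epsilon_d(q)$, so $X_2$ is a valid generating set. Conversely, one must check that all the relations of $\mathcal{R}_1$ become consequences of $\mathcal{R}_2$ after writing each $c(\lambda)$ as the appropriate power of $c$: the relations $r_i^2 = (r_0r_1)^2 = 1$ are in $\mathcal{R}_2$ by $(4)$; the relation $c^{q-1} = 1$ (or $c^{(q-1)/2}$, or $c^{q-1} = r_0$, depending on type, reflecting whether $\tau(C^\epsilon(\xi))$ generates $\F_q^\times$ or $\F_q^{\times 2}$ and whether $c(-1) \in \Omega$) encodes the order of $c$; and the conjugation relations $[r_i, c(\lambda)] = 1$ or $r_i^{c(\lambda)} = r_{i+\iota(\lambda)}$ follow from the corresponding relation for $c$ once one knows $\iota(\xi) = 1$, so that $r_0^c = r_1$, $r_1^c = r_0$ in the $+$ and $-$ cases. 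The relation $c(-1) = r_0$ (type $\circ$) or $c(-1) = r_0r_1$ (type $-$) becomes $c^{(q-1)/2} = r_0$ respectively the identity $(c(\sqrt{\xi\gamma^{-1}})c_0)^{q-1} = r_0$ after using $c_0^2 = c(\gamma)$, $[c_0, c(\lambda)] = 1$, and $r_i^{c_0} = r_{i+1}$ to eliminate $c_0$; this last bookkeeping in the $-$ case is where I expect the most care is needed, since one must track the image of $\sqrt{\xi\gamma^{-1}}$ under $\tau$ and the parity of the exponent.

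Having shown that $P_2$ and $P_1$ present isomorphic groups via this Tietze transformation, and that $P_1$ presents $G^\epsilon_d(q)$ by Proposition~\ref{prop:cosets}, the corollary follows. The main obstacle is the $-$ type with $q$ odd: there $c$ is defined as a product $c(\sqrt{\xi\gamma^{-1}})c_0$ mixing the two original generators, so eliminating $c_0$ and $c(\lambda)$ in favour of $c$ requires checking that the combined relation set $\mathcal{R}_2$ really does capture $c^{q-1} = r_0$ and $r_1^c = r_0$ correctly — i.e.\ that $\tau(c) = \xi\gamma^{-1} \cdot \gamma = \xi$ as needed and that $\iota$ of the relevant scalar is $1$. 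Everything else is a routine check that each listed relation of $\mathcal{R}_2$ holds in $G^\epsilon_d(q)$ together with the order count from \cite[\S2.1]{KL90} already invoked in Proposition~\ref{prop:cosets}.
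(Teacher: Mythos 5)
Your overall approach matches the paper's: the paper's own ``proof'' is the one-sentence remark preceding the corollary, which observes that the substitution $c:=c(\xi)$ (or $c:=c(\sqrt{\xi\gamma^{-1}})c_0$ in type $-$, $q$ odd) is a Tietze transformation from $P_1$ to $P_2$, so the corollary is immediate from Proposition~\ref{prop:cosets}. You identify the same transformation, and your subsidiary observations (that $\xi$ primitive makes $c(\xi)$ generate $\{c(\lambda)\}$, that $\iota(\xi)=1$ determines the conjugation action, that $\tau(c)=\xi\gamma^{-1}\cdot\gamma=\xi$ in the $-$ case) are all correct.

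However, there is a genuine gap at exactly the point you flag as ``where I expect the most care is needed'' and then defer: you assert that the substitution turns $c(-1)=r_0r_1$ into the identity $(c(\sqrt{\xi\gamma^{-1}})c_0)^{q-1}=r_0$, but if you actually carry out the computation you get something different. Using $[c_0,c(\lambda)]=1$ and $c_0^2=c(\gamma)$, one has $c^2 = c(\xi\gamma^{-1})c_0^2 = c(\xi\gamma^{-1})c(\gamma)=c(\xi)$, hence $c^{q-1}=(c^2)^{(q-1)/2}=c(\xi^{(q-1)/2})=c(-1)=r_0r_1$, not $r_0$. Indeed, the relation $c^{q-1}=r_0$ as it appears in Table~\ref{tab:orth} is not consistent with a faithful presentation: from $r_0^c=r_1$ it yields $r_1=r_0^c=(c^{q-1})^c=c^{q-1}=r_0$, collapsing $\langle r_0,r_1\rangle$ and giving a group of order $2(q-1)$ rather than $|G^-_d(q)|=4(q-1)$. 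The same issue arises in type $+$ (where $c(\xi)^{q-1}=c(1)=1$, so the relation should be $c^{q-1}=1$, not $c^{q-1}=r_0$), and in type $\circ$ the entry $c^{(q-1)/2}$ should read $c^{(q-1)/2}=r_0$ to match $c(-1)=r_0$ in $\mathcal{R}_1$ --- which, interestingly, is what you write, silently departing from the table. So your sketch has the right structure, but by leaving the decisive type-$-$ calculation as a promise you fail to notice that a Tietze reduction does \emph{not} produce the relations as literally listed; completing the proof requires either correcting those entries or explaining the discrepancy.
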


We can now prove Theorem~\ref{thm:quotdetail} for the orthogonal groups. If $q$ is odd and $Q$ is of $-$ type, we  assume that the discrete log of $\gamma$ has been precomputed in 
(\ref{thm:quotdetail:pc}). 
We only give the case where $q$ is odd, $d$ is even, and $Q$ is of $-$ type, as the other orthogonal cases are similar. 

\begin{proof}[Proof of Theorem~\ref{thm:quotdetail}, orthogonal minus case]
Proof of (\ref{thm:quotdetail:gen}): This is immediate from Proposition~\ref{prop:cosets}.

Proof of (\ref{thm:quotdetail:pres}): It is immediate from Proposition~\ref{prop:cosets} that $P_1$ presents $G^\epsilon_d(q)$. For the homomorphism, we first find a canonical matrix  $X$ which tranforms the canonical form to $F$, in $O(d^\omega + d^2\log q)$ field operations.
We  compute $\tau(g)$ in $O(d^2)$ field operations. 
If $\tau(g)$ is a square, we take 
$\la=\sqrt{\tau(g)}$, $z = c(\la)$ and $C= C^-(\la)$. 
Otherwise we take $\la=\sqrt{\tau(g)\gamma^{-1}}$, $z = c_0c(\la)$, and $C= C_0^-C^-(\la)$. 
We then let $h= g^{X^{-1}}C^{-1}$,  find $a= \det(h)$ and $b= \spin(h)$ in $O(d^\omega + \log q)$ field operations. We map $g$ to $r_0^{b'} r_1^b z$, where $b' = b$ if $a = 1$ and $b' = b+1$ otherwise. %

Proof of (\ref{thm:quotdetail:pc}): It is immediate from Corollary~\ref{cor:cosets} that $P_2$ presents $G^\epsilon_d(q)$. For the homomorphism, find $k = \log_{\xi\gamma} \la=\frac{\log\la}{\log\gamma+1}$ with a discrete log call, and map $g$ to $r_0^{b'} r_1^b c^k$. 

Proof of (\ref{thm:quotdetail:coset}): Write down  $R_0$ and $R_1$ from
Proposition~\ref{prop:reflections} in $O(d^\omega + \log q)$ field operations, then the 
representative is $(R_0^{b'} R_1^b C)^X$.  
\end{proof}

We finish with a special case, where our algorithms run faster. 

\begin{prop}\label{prop:decompose_go}
Let $Q$ be a nondegenerate quadratic form, and let $g \in \orth_d(q, Q)$.
Then the image of $g$ under the natural homomorphism to $\BBF_2^+$ ($q$ even) or $(\BBF_2^+)^2$ ($q$ odd)  can be found by a deterministic algorithm in $O(d^\omega)$ field operations ($q$ even) or $O(d^\omega +  \log q)$ field operations ($q$ odd) . 
A canonical coset representative for $g$
can then be constructed by a deterministic algorithm in $O(d^2)$ field operations if $q$ is even and, given $\zeta$, by a  Las Vegas algorithm in $O(d^\omega + \log q)$ field operations otherwise. 
\end{prop}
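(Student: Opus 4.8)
# Proof proposal for Proposition~\ref{prop:decompose_go}

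The plan is to reuse the machinery already developed for $\orth_d(q,Q)$, but to avoid the expensive call to Theorem~\ref{thm:fix_form} that would otherwise dominate the running time. The key observation is that for $g\in\orth_d(q,Q)$ we have $\tau(g)=1$, so the conformal "scaling" part of the coset map is trivial and only the $\spin$ (and, for odd $q$, the $\det$) invariants remain to be computed. So the first step is: compute $\det(g)$ in $O(d^\omega)$ field operations (for $q$ odd) and $\spin(g)$ by Theorem~\ref{thm:spin}, which costs $O(d^\omega)$ for $q$ even and $O(d^\omega+\log q)$ for $q$ odd. From $\det(g)$ and $\spin(g)$ one reads off the image in $\BBF_2^+$ or $(\BBF_2^+)^2$ exactly as in the proof of Theorem~\ref{thm:quotdetail}: the image is determined by $b:=\spin(g)$ together with $\iota(\det(g))\in\{0,1\}$ (for $q$ odd). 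This gives the claimed timings for the first assertion.

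For the canonical coset representative, the point is that $g$ already lies in $\orth_d(q,Q)$ itself, so a representative of $\Omega_d(q,Q)\,g$ can be taken to be a product of canonical reflections of the given form $Q$, rather than a conjugate of a canonical matrix by the transformation matrix $X$. Concretely: by Proposition~\ref{prop:reflections} we construct the canonical reflections $R_0,R_1$ of $Q$ itself with $\spin(R_i)=i$, in $O(d^2)$ field operations for $q$ even and $O(d^2+\log q)$ for $q$ odd. For $q$ even the representative is $R_0^{\,b}$ where $b=\spin(g)$; by Lemma~\ref{lem:refl}(\ref{lem:refl:spine}) and the fact that $\orth_d(q,Q)/\Omega_d(q,Q)\cong\BBF_2^+$ this is correct, and writing it down costs only $O(d^2)$ field operations. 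For $q$ odd one uses both $R_0$ and $R_1$: since $\det(R_i)=-1$ and $\spin(R_i)=i$ (Lemma~\ref{lem:refl}), the coset $\Omega_d(q,Q)\,g$ is hit by $R_0^{\,b'}R_1^{\,b}$ where $b=\spin(g)$ and $b'$ is chosen (as in the orthogonal minus proof) so that the product has the same determinant as $g$ — that is, $b'=b$ if $\det(g)=1$ and $b'=b+1$ otherwise. Because there are at most two reflections and $\orth_d(q,Q)/\Omega_d(q,Q)\cong(\BBF_2^+)^2$, this product is the unique representative determined by the two invariants, hence canonical.

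The running time for the odd-$q$ representative is then $O(d^\omega+\log q)$: the $\log q$ comes from computing $\iota(\det(g))$ and from the Las Vegas step inside Proposition~\ref{prop:reflections} (finding the canonical vectors $u_0,u_1$ via Lemma~\ref{lem:vectors}, which needs a square root and hence Theorem~\ref{quadratic}), while the $d^\omega$ comes from computing $\det(g)$ and $\spin(g)$ and from the matrix multiplication $R_0^{b'}R_1^b$; this also explains why the algorithm is Las Vegas and needs $\zeta$ (it is used in Proposition~\ref{prop:canelts} and hence in $R_0,R_1$). For $q$ even everything is deterministic and the reflection $R_0$ is built deterministically in $O(d^2)$.

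The main obstacle I expect is not any single hard step but rather the bookkeeping needed to certify that the representative is genuinely \emph{canonical} and not merely correct: one must check that $R_0,R_1$ are independent of random choices (which follows from the canonicity assertions in Proposition~\ref{prop:reflections} and Lemma~\ref{lem:vectors}), and that the exponents $b,b'$ are functions of $g$ alone. The only subtlety of content is verifying that $R_0^{b'}R_1^b$ and $g$ really do lie in the same $\Omega_d(q,Q)$-coset, which reduces to checking that they agree on both the spinor norm homomorphism and the determinant homomorphism — and this is exactly what the choice of $b$ and $b'$ arranges, using that $\spin$ is a homomorphism (Lemma~\ref{lem:refl:coset}) and $\det(R_i)=-1$.
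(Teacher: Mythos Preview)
Your proposal is correct and follows exactly the approach implicit in the paper, which states this proposition without proof as a direct specialisation of the preceding results: since $\tau(g)=1$ there is no need for the transformation matrix $X$, so one computes $\spin(g)$ (and $\det(g)$ for odd $q$) via Theorem~\ref{thm:spin} and Theorem~\ref{mat_complexity}, and then builds the representative from the canonical reflections $R_0,R_1$ of the given form $Q$ supplied by Proposition~\ref{prop:reflections}. One small notational slip: for $g\in\orth_d(q,Q)$ one has $\det(g)\in\{\pm1\}$, so the relevant bit is simply whether $\det(g)=-1$, not $\iota(\det(g))$; your formula for $b'$ already reflects this correctly.
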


\section{Applications: conjugacy and maximal subgroups}\label{sect:app}
Given a finite group $G$, the basic conjugacy problems are:
\begin{enumerate}
\item\label{prob:reps} find a set of canonical representatives of the conjugacy classes of $G$;
\item\label{prob:conj} given $x\in G$, find $g\in G$ such that $x^g$ is a canonical class representative;
and
\item\label{prob:cent} given a class representative $x$, find generators for $\CC_G(x)$.
\end{enumerate}
We conjugate to a class representative in problem~\ref{prob:conj}, rather than designing an algorithm to conjugate arbitrary pairs of elements,
because it reduces memory requirements.  
This way we need only work with a single element of the group, since the representative itself is implicit in the algorithm but does not usually
need to be written down.
This was our motivatation for the inclusion of canonical coset representatives in Theorem~\ref{thm:quotdetail}(\ref{thm:quotdetail:coset}). 

Suppose we can solve the element conjugacy problem in the group $\Delta$.
We  briefly describe how to solve the same problem for groups $G$ with $\Omega\le G\le \Delta$.
This is a slight generalisation of the results of \cite{Wall80}, and is based on the following lemma.
\begin{lem}
Let $\Delta$ be a group, $A$ a finite group, and $\phi:\Delta\to A$ an epimorphism.
Let $\Omega$ be the kernel of $\phi$.  
Suppose $G$ is a group with $\Omega\le G\normeq \Delta$.
Given $g \in G$, the $G$-classes contained in $g^\Delta$ correspond
to the elements of $A/\phi(\CC_\Delta(g)G)$ under the map
$$
  (g^h)^\Delta \mapsto \phi(\CC_\Delta(g)Gh)
$$
for $h$ in $\Delta$.
\end{lem}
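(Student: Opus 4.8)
The plan is to exploit the classical correspondence between $G$-classes inside a $\Delta$-class and a double-coset-type quotient, transported through $\phi$. First I would fix $g \in G$ and consider the $\Delta$-class $g^\Delta$. Since $\Omega \le G \normeq \Delta$, every $\Delta$-conjugate $g^h$ (for $h \in \Delta$) lies in $G$, because $G^h = G$. So $g^\Delta$ is a disjoint union of $G$-classes, and the task is to parametrise these $G$-classes. The natural first step is to set up the surjection from $\Delta$ onto the index set: send $h \in \Delta$ to the $G$-class $(g^h)^G$. I would then determine exactly when two elements $h_1, h_2 \in \Delta$ give the same $G$-class, i.e.\ when $g^{h_1}$ and $g^{h_2}$ are $G$-conjugate.

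The key computation is: $g^{h_1} \sim_G g^{h_2}$ iff there is $k \in G$ with $(g^{h_1})^k = g^{h_2}$, i.e.\ $h_1 k h_2^{-1} \in \CC_\Delta(g)$, i.e.\ $h_2 \in \CC_\Delta(g)\, h_1\, G$ (using that $h_1^{-1} G h_1 = G$ to move $G$ to the right). Hence the $G$-classes inside $g^\Delta$ are in bijection with the double cosets $\CC_\Delta(g) \backslash \Delta / G$. Now I would push this through $\phi$. Since $\Omega = \ker\phi \le G$, the subgroup $G$ is the full preimage $\phi^{-1}(\phi(G))$, and likewise $\CC_\Delta(g)G$ is the full preimage of $\phi(\CC_\Delta(g)G)$; because $A$ is abelian (as the quotient we care about, $\Delta/\Omega$, is abelian in all our cases — though one only needs that $\phi(G)$ is normal in $A$, which is automatic as $G \normeq \Delta$), the double cosets $\CC_\Delta(g)\backslash \Delta / G$ map bijectively onto the cosets of $\phi(\CC_\Delta(g))\phi(G) = \phi(\CC_\Delta(g)G)$ in $A$. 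Tracking the map explicitly, the $\Delta$-class of $g^h$ corresponds to the coset $\phi(\CC_\Delta(g) G h)$, which is exactly the stated correspondence.

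I expect the main obstacle to be bookkeeping rather than mathematics: one must be careful that the map $(g^h)^\Delta \mapsto \phi(\CC_\Delta(g)Gh)$ in the statement is written with $(g^h)^\Delta$ (a mild abuse — it really ranges over the $G$-classes $(g^h)^G$ inside the fixed $\Delta$-class $g^\Delta$, indexed by $h$), and that well-definedness and injectivity of the induced map on $A/\phi(\CC_\Delta(g)G)$ both follow from the double-coset identity above. I would write the proof in two short paragraphs: one establishing $g^{h_1} \sim_G g^{h_2} \iff h_2 \in \CC_\Delta(g) h_1 G$ by a direct conjugation argument, and one observing that applying $\phi$ turns the right-hand condition into $\phi(h_2) \in \phi(\CC_\Delta(g)G)\phi(h_1)$ (using $\ker\phi \le G$ to see no collapsing or spreading occurs), which is precisely equality of the two cosets $\phi(\CC_\Delta(g)Gh_i)$ in $A$.
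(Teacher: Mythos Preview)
Your proposal is correct and follows essentially the same route as the paper: the paper also shows $(g^h)^G=(g^{h'})^G$ iff $h\in\CC_\Delta(g)Gh'$ (using normality of $G$), and then simply observes that $A/\phi(\CC_\Delta(g)G)$ is naturally in bijection with $\Delta/\CC_\Delta(g)G$ since $\ker\phi\le G\le\CC_\Delta(g)G$. Your detour through double cosets and your worry about $A$ being abelian are unnecessary---$A/\phi(\CC_\Delta(g)G)$ is just a coset space, not a quotient group, and the bijection with $\Delta/\CC_\Delta(g)G$ requires only $\ker\phi\le\CC_\Delta(g)G$.
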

\begin{proof}
Clearly every $G$-class in $g^\Delta$ is of the form $(g^h)^G$ for some $h\in \Delta$.
Now $(g^h)^G=(g^{h'})^G$ if and only if  $g^{hg'}=g^{h'}$ for some $g'\in G$, that is,  $hg'{h'}^{-1}$ is in $\CC_\Delta(g)$ for some $g'\in G$.
Since $G$ is normal in $\Delta$, this is equivalent to $h$ being in $\CC_\Delta(g)Gh'$, 
which means $\CC_\Delta(g)Gh=\CC_\Delta(g)Gh'$.
Since $A/\phi(\CC_\Delta(g)G)$ is naturally isomorphic to $\Delta/\!\CC_\Delta(g)G$, we are
done. 
\end{proof}
\noindent
Hence, in order to compute the classes in $G$ from the classes in $\Delta$, we need to know the images of centralisers under $\phi$ and we need representatives
$h_a\in\phi^{-1}(a)$ for all $a\in A$.
If $G$ is not normal in $\Delta$, we need to apply this lemma more than once:
since $\Delta/\Omega$ is soluble for classical groups $\Omega$, every $G$ with $\Omega\le G\le \Delta$ is subnormal in $\Delta$.

Solving problem (\ref{prob:reps}) is only possible for relatively small groups, but since Theorem~\ref{thm:quotdetail}(\ref{thm:quotdetail:coset}) gives canonical \emph{coset} representatives we can find canonical \emph{class} representatives to solve problem (\ref{prob:conj}) without first solving (\ref{prob:reps}).
Canonical class representatives also simplify the centraliser problem (\ref{prob:cent}), and allow us to compare results between different runs of the algorithms.
A detailed description of these algorithms is given in \cite{HallerMurray07Pre}.

An important application of Theorem~\ref{thm:fix_form} is to the construction of
maximal subgroups of classical groups, as in \cite{HRD, HRD10}.
When writing down generating matrices for a maximal subgroup, it is often convenient to construct initial matrices which preserve a form other than Magma's canonical classical form. We then conjugate the matrices so that they preserve the correct form. 
Since the isometry construction algorithm given in \cite{HRD} does not return the same conjugating matrix each time,  different conjugates of the maximal subgroup are found each time it is constructed. Using
Theorem~\ref{thm:fix_form}, the \emph{same} subgroup 
can now be constructed each time. This is not essential, but is often useful: for example when investigating containments between subgroups.

\section{Timings}\label{sect:timings}
In this section we present two tables of timings data for a Magma 2.14-9~\cite{Magma07} implementation of our algorithms. We tested our spinor norm algorithm on $\orth_d(q, Q)$ on all five cases: odd dimension and odd characteristic, and both types of form in even dimensions in both even and odd characteristic. In each case we computed the spinor norm of a random element of a random conjugate of the general orthogonal group. 
 
Next we tested the canonical coset representative algorithms on all five cases. We took a random conjugate of the conformal orthogonal group, and then selected a random element.   The time to find coset representatives for elements of the general orthogonal group lies between that taken to compute the spinor norm and to find coset representatives in the conformal orthogonal group. 
 
The experiments were carried out on a 1.5 GHz PowerPC G4 processor. The machine has 1.25GB of RAM, but memory was not a factor. All times are given in milliseconds, and are the average of 50 trials; the symbol -- indicates that the average time was less than $1$ millisecond.
 
As we would expect, the time required grows extremely slowly with $q$, and somewhat more quickly with $d$. Far less time is required for even $q$ than odd $q$.
Notice however that the representation of the field is more significant than its size, as $3^{16}$ is only about four times larger than $10000019$, yet the tests always take far longer.

\begin{table} \caption{Spinor norm on $\orth^{\epsilon}_d(q, Q)$} \label{ta:spinor} \vspace{-4mm}
$\begin{array}{ll | rrrrr | rrr |rrrrr}
&&&&p&&&3^i&&&2^i&&&&\\
\mbox{Type} & d & 5 & 17 & 47 & 73 & 10000019 & 3^6 & 3^{11}&  3^{16} &  2^5 &  2^{10} &  2^{20} & 2^{40} & 2^{80}  \\
\hline
\circ & 15 & 1 & 1 & 1 & 1 & 1 & 1 & 2 & 5 &&&  &  &  \\
& 55 & 4 & 9 & 9 & 9 & 11 & 11 & 28 & 184 &  &  &  &  &  \\ 
& 95 & 11 & 27 & 27 & 28 & 34 & 45 & 140 & 1083 &  &  &  &  &  \\
\hline
+ & 20 & 1 & 1 & 1 & 1 & 1 & 1 & 3 & 10 & - & - & - & 4 & 4 \\
& 60 & 4 & 11 & 10 &11 & 13 & 13 & 38 & 246 & - & 1 & 12 & 60 & 78 \\
& 100 &12 & 28 & 28 &  27 &  33 & 50 & 153 & 1408  & 2 & 7 & 57 & 311 & 413 \\
\hline
- & 20 & 1 & 1 & 2 & 1 & 1 &  1 & 3 &  10  &  - & - & - & 4 & 3 \\
& 60 & 4 & 11 & 11 & 11 & 14 & 14 &  36 & 256 &  -& 1 & 12 & 60 & 82  \\
& 100 & 11 & 28 & 27 & 26 & 33 & 48 & 148  & 1373  & 4 & 7 & 56 &  289 &  390\\
\end{array}$
\end{table}
\begin{table} \caption{Coset representatives in $\co^{\epsilon}_d(q, Q)$} \label{ta:coset_odd}  \vspace{-4mm} $\begin{array}{ll | rrrrr | rrr | rrrrr}
&&&&p&&&3^i \\
\mbox{Type} & d & 5 & 17 & 47 & 73 & 10000019 & 3^6 & 3^{11}&  3^{16} & 2^5 &  2^{10} &  2^{20} & 2^{40} & 2^{80} \\
\hline
\circ & 15 & 3 & 4 & 4 & 4 & 6 & 3 & 5 & 13 &&&&& \\
& 55 &  33 & 48 & 55 & 47 & 59 & 46 & 72 & 392 &&&&&\\
& 95 & 147 & 201 & 184 & 176 & 211 & 189 & 317 & 2342 &&&&&\\
\hline
+ & 20 &  6  & 7 & 7 & 7 & 10 & 7 & 10 & 34  & 1 & 2 & 4 & 8 & 14\\
& 60 & 46 & 62 & 68 & 65 & 77 & 76 & 148 & 936  & 17 & 18 & 26 & 124 & 170 \\
& 100 & 168 & 224 & 209 & 226 & 257 & 305 & 627 & 5645 & 49 & 67 & 127 & 553 & 629 \\
\hline
- & 20 & 7 & 9 &  9 & 9 & 11 & 153 & 15 & 40  & 1 & 1 &  3 & 7 & 11\\
& 60 & 50 & 72 & 71 & 70 & 90 & 244 & 196 & 1168 & 14 & 12 & 25 & 131 & 154\\
& 100 & 153 & 225 & 217 & 229 & 257 & 474 & 799 & 7969  & 71 & 60 & 119 & 553 & 736  \\
\end{array}$
\end{table}

\bibliographystyle{alpha}
\bibliography{spinor}

\end{document}